\author{Tobias B\"ohle and Christian Kuehn}
\title{Mathematical Analysis of Nonlocal PDEs for Network Generation}
\newcommand{\norm}[1]{\left\lVert #1 \right\rVert } 
\newcommand{\be}{\begin{equation}}
\newcommand{\ee}{\end{equation}}
\newcommand{\benn}{\begin{equation*}}
\newcommand{\eenn}{\end{equation*}}
\newcommand{\bea}{\begin{eqnarray}}
\newcommand{\eea}{\end{eqnarray}}
\newcommand{\beann}{\begin{eqnarray*}}
\newcommand{\eeann}{\end{eqnarray*}}
\newcommand{\R}{\mathbb{R}} 
\newcommand{\N}{\mathbb{N}} 
\def\cD{\mathcal{D}}
\def\cF{\mathcal{F}}
\theoremstyle{definition}
\newtheorem{definition}{Definition}[section]		
\newtheorem{problem}[definition]{Problem}			
\theoremstyle{plain} 	
\newtheorem{lemma}[definition]{Lemma}				
\newtheorem{theorem}[definition]{Theorem}			
\newtheorem{proposition}[definition]{Proposition}	
\theoremstyle{remark}
\newtheorem{remark}[definition]{Remark}				
\theoremstyle{plain}
\numberwithin{equation}{section}
\begin{document}


\begin{center}
	{\Huge \bfseries Mathematical Analysis of Nonlocal PDEs for Network Generation}\\[3mm]
	Tobias B\"ohle\textsuperscript{1} and Christian Kuehn\textsuperscript{1}\\
	{\small \textsuperscript 1Technical University of Munich, Faculty of Mathematics, Boltzmannstr.~3, 85748 Garching b.~M\"unchen, Germany}
\end{center}

\hrulefill
\paragraph{Abstract} 
In this paper, we study a certain class of nonlocal partial differential equations (PDEs). The equations arise from a key problem in network science, i.e., network generation from local interaction rules, which result in a change of the degree distribution as time progresses. The evolution of the generating function of this degree distribution can be described by a nonlocal PDE. To address this equation we will rigorously convert it into a local first order PDE. Then, we use theory of characteristics to prove solvability and regularity of the solution. Next, we investigate the existence of steady states of the PDE. We show that this problem reduces to an implicit ODE, which we subsequently analyze. Finally, we perform numerical simulations, which show stability of the steady states. 
\\

\hrulefill

\pagenumbering{arabic}
\pagestyle{headings}

\section{Introduction}

In this paper we will study a class of PDEs, which have the structure
\begin{equation}
\label{eq:introeq}
	G_t(x,t) = \cF(G_x(x,t),G(x,t),x;G_x(1,t)),
\end{equation}
where $G=G(x,t)$ is the unknown, subscripts denote partial derivatives, $(x,t)\in\cD$ for an open domain $\cD\subset \R\times \R_{\ge 0}$ containing the line $\{1\} \times \R_{> 0}$, and the mapping $\cF$ as well as the boundary/initial conditions will be specified below. In~\eqref{eq:introeq}, the nonlocal term is given by $G_x(1,t)$, i.e., the right-hand side of the evolution PDE depends nonlocally on a single spatial point located at $x=1$.

The PDE~\eqref{eq:introeq} arises naturally from modelling dynamics of networks/graphs~\cite{Silk2016}. Let us briefly motivate, why it is of key importance to study the dynamics of networks, respectively network generating mechanisms. The most classical model of complex networks is the Erd\H{o}s-Reny\'i model~\cite{ErdosRenyi}, where each edge between a fixed number of $N$ nodes is present with equal probability $p$. We can think of the Erd\H{o}s-Reny\'i model as a dynamical generation process. We start with a graph with $N$ vertices and no edges. At each discrete time step, we look at a new pair of vertices and with probability $p/N$ we add an edge between them. After this process has finished, the probability of a vertex having degree $k$ converges as $N\to \infty$ (weakly) to a Poisson distribution. However, it is by now understood~\cite{Albert2002} that many realistic complex networks~\cite{Newman} are not well described by the Erd\H{o}s-Reny\'i model. In particular, the degree distribution rarely obeys a Poisson distribution but seems to resemble more closely a power law; see e.g.~\cite{Broder2000}. Of course, one always has to be careful with postulating exact power laws~\cite{StumpfPorter}. 

In summary, it is certainly useful to study different theoretical mechanisms for network generation, and then observe the resulting degree distributions. This motivated the study by Silk et al~\cite{Silk2016}, which is our starting point. As an example of a network generation process, consider preferential attachment of edges\footnote{The preferential attachment that we here refer to does not correspond to the usual procedure of creating power law graphs. In particular, graphs of the Barab\'asi-Albert model are formed by subsequently introducing new vertices and then connecting them to the rest of the network~\cite{BarabasiAlbert}. However, when speaking of preferential attachment, we do not introduce new nodes to the network but only new edges between already existing nodes. The process to create Barab\'asi-Albert graphs is called addition of nodes by preferential attachment, see Table \ref{tab:processoverview}.}: Randomly pick two unconnected vertices with a probability proportional to their degree and create an edge between them. Call the rate at which this process of preferential attachment of edges takes place $l_p\geq 0$. 

Given a degree distribution $p_k$ of our graph, we now want to investigate how this process affects the degree distribution~\cite{Silk2016}. Therefore, we first rephrase this process under the additional assumption that for each $t>0$ two nodes chosen with a probability proportional to their degree are almost surely disconnected. This allows us to put the process in the following words: Independently pick two nodes from the network with a probability proportional to their degree and increase their degree by one, i.e. add an edge between them. As the probability of picking a vertex with degree $k$ is proportional to its degree, we set $q_k = c k p_k$, where $c$ is chosen such that $\sum_{k=0}^\infty q_k=1$. By the law of total probability, the total change of $p_k$ is the sum of the effect of degree-$l$-nodes on $p_k$ multiplied with the probability that we choose a node of degree $l$. That is,
\begin{align*}
	\frac{\mathrm dp_k}{\mathrm d\tilde t} = \sum_{l = 0}^\infty \frac{\mathrm d p_{k,l}}{\mathrm d \tilde t} q_l,
\end{align*}
where $\tilde t = 2l_p t$ and $\frac{\mathrm d p_{k,l}}{\mathrm d \tilde t}$ denotes the change of $p_k$ if an $l$-degree node increases its degree by one. Now it remains to evaluate $\frac{\mathrm d p_{k,l}}{\mathrm d \tilde t}$. For $l = k-1$, a node changes its degree from $k-1$ to $k$, so $p_k$ increases by one normalized unit, i.e. we set $\frac{\mathrm d p_{k,k-1}}{\mathrm d \tilde t} = 1$. For $k = l$, the degree of a node having degree $k$ will increase to $k+1$. Thus, it is no longer $k$ and consequently $p_k$ reduces by one normalized unit, so we set $\frac{\mathrm d p_{k,k}}{\mathrm d \tilde t} = -1$. In case that neither $l = k-1$ nor $l = k$, the probability $p_k$ remains unaffected, so $\frac{\mathrm d p_{k,l}}{\mathrm d \tilde t} = 0$, for $l\notin\{k,k-1\}$. Thus, the final ODE for this process is
\begin{align}
	\frac{\mathrm d p_k}{\mathrm dt} = 2l_p \frac{1}{\sum_{n=0}^\infty n p_n}\left((k-1)p_{k-1}-kp_k\right),
\end{align}
in the infinite network limit $N\to \infty$, where we set $p_{-1}\equiv 0$. There are many other processes one can now consider, e.g., re-wiring of edges, deletion of vertices and/or edges, random additions of vertices, etc. Combining a number of these processes, which are all listed in Table \ref{tab:processoverview}, Silk et al~\cite{Silk2016} find the final equations for $p_k$ as
\begin{subequations}
\label{eq:infiniteODEs}
	\begin{align}
		\label{eq:randomrewiring}		
		\frac{\mathrm d p_k}{\mathrm dt} &= \omega_r 
		\left[(k+1)p_{k+1}-kp_k+\left(\sum_n np_n\right)(p_{k-1}-p_k)\right]\\
		\label{eq:preferentialrewiring}
		&\quad+\omega_p[(k+1)p_{k+1}-kp_k+(k-1)p_{k-1}-kp_k]\\
		\label{eq:deletionoflinks}
		&\quad+l_d[(k+1)p_{k+1}-kp_k]\\
		\label{eq:randomadditionoflinks}
		&\quad+2l_r[p_{k-1}-p_k]\\
		\label{eq:preferantialadditionoflinks}
		&\quad+2l_p\left[\frac{1}{\sum_n np_n}((k-1)p_{k-1}-kp_k)\right]\\
		\label{eq:deletionofnodes}
		&\quad+n_d\left(\sum_n np_n\right)[(k+1)p_{k+1}-kp_k]\\
		\label{eq:randomadditionofnodes}
		&\quad+n_r[m(p_{k-1}-p_k)-p_k+\delta_{m,k}]\\
		\label{eq:additionofnodesbypreferentialattachment}
		&\quad+n_p\left[\frac{m}{\sum_n np_n}((k-1)p_{k-1}-kp_k)-p_k+\delta_{m,k}\right].
	\end{align}
\end{subequations}
Here, each line represents an individual process\footnote{However, some of the processes may need additional normalization. In particular, we work with the equations derived in~\cite{Silk2016} as our starting point.}. The coefficients $\omega_r,\omega_p,l_d,l_r,l_p,n_d,n_r,n_p$ are nonnegative real constants, which display the rate at which the corresponding processes take place. Furthermore, $m\in \N_0$ is also a nonnegative constant which may only take values in the integers. In the above equation we set $p_{-1} \equiv 0$.

\begin{table}[h]
\centering

\begin{tabular}{|l|l|}
	\hline
	Process & Short Description\\ \hline
	
	Random rewiring \eqref{eq:randomrewiring} & Select a link at random, break it,\\
	&and connect one of the nodes \\
	&	to another uniformly chosen random node.\\ \hline
	
	Preferential rewiring \eqref{eq:preferentialrewiring} & Same as random rewiring, except that  \\
	&the last node is chosen with \\
	&a probability proportional to its degree.\\ \hline
	
	Deletion of links \eqref{eq:deletionoflinks} & A randomly selected link \\
	&is deleted from the network.\\ \hline
	
	Random addition of links \eqref{eq:randomadditionoflinks} & Two unconnected nodes are picked randomly \\
	&and an edge between them is introduced.\\ \hline	
	
	Preferential addition of links \eqref{eq:preferantialadditionoflinks} & Same as random addition of links, \\
	&except that the two nodes are picked with \\
	&a probability proportional to their degree.\\ \hline
		
	Deletion of nodes \eqref{eq:deletionofnodes} & Select a node at random and delete it \\
	&from the network including all its edges.\\ \hline
		
	Random addition of nodes \eqref{eq:randomadditionofnodes} & Introduce a new node of degree $m$ \\
	&into the network and choose \\
	&its neighbors randomly from the network.\\ \hline
		
	Addition of nodes & Same as random addition of nodes, except that \\
	by preferential attachment \eqref{eq:additionofnodesbypreferentialattachment} &the new neighbors are chosen with \\
	&a probability proportional to their degree.\\ \hline
\end{tabular}
\caption{\label{tab:processoverview}Processes included in \eqref{eq:infiniteODEs}. For a detailed description of the processes, see \cite{Silk2016}.}
\end{table}

By using the generating function $G(x,t) = \sum_{k=0}^\infty p_k(t) x^k$
one can transform this infinite system of ODEs into one single PDE~\cite{Silk2016}, which has the structure~\eqref{eq:introeq}. We multiply \eqref{eq:infiniteODEs} with $x^k$ and then sum over $k\ge 0$. The resulting PDE describing the evolution of $G(x,t)$ is then given by \eqref{eq:MainEquation}. Together with an initial condition $G(x,0) = \sum_{k=0}^\infty \tilde p_k x^k$, we can then formulate this as a mathematical problem as follows:

\begin{problem}\label{prob:main}
Let $\cD\subset \R\times \R_{\ge 0}$ be given with $\cD$ being an open set containing the line $\{1\}\times \R_{> 0}$ in its interior $\mathring \cD$. Find a function $G\colon  \cD\to \R$ such that
\begin{align}
\begin{split}\label{eq:MainEquation}
			G_t(x,t) &= (x-1)\left[x\left( \omega_p+\frac{2l_p+n_p m}{G_x(1,t)}\right)
			-\omega_r-\omega_p-l_d-n_dG_x(1,t)\right]G_x(x,t)\\ &+\left[ (x-1)
			\left(\omega_rG_x(1,t)+2l_r+n_rm\right)-n_r-n_p\right] G(x,t) +(n_r+n_p)x^m
\end{split}
\end{align}
for all $(x,t)\in \cD$ and additionally
\begin{align}	
G(x,0) &= \sum_{k=0}^\infty \tilde p_k x^k \eqqcolon \tilde h(x)\label{eq:InitialCond}
\end{align}
for all $(x,0)\in \partial \cD$, where $\tilde p_k \in[0,1],
\ \sum\limits_{k=0}^\infty \tilde p_k = 1$.
\end{problem}

In this paper, we provide a mathematical study of the nonlocal PDE~\eqref{eq:MainEquation}-\eqref{eq:InitialCond}. We prove the existence and regularity of (classical) solutions using an auxiliary problem in combination with the method of characteristics. In this context, we also characterize the domain $\cD$ and relate it to the convergence radius of the series defining the generating function $G(x,0)$. Furthermore, we fully characterize the existence of steady states analytically, and study their stability numerically. We observe that the steady states are globally stable under reasonable conditions.\medskip

\textbf{Acknowledgments:} CK would like to thank the VolkswagenStiftung for support via a Lichtenberg Professorship. CK also would like to thank Franz Achleitner for interesting discussions regarding nonlocal PDEs. TB and CK would like to thank an anonymous referee, whose comments have helped to improve the presentation of the results.

\section{Solution Theory}\label{sec:characteristics}

At first, is seems difficult to find a solution for Problem \ref{prob:main}, mainly because of the nonlocal term $G_x(1,t)$. However, the PDE \eqref{eq:MainEquation} is only of first order. If the nonlocal term was not there, we could try to apply the method of characteristics, a general method for dealing with local first order PDEs. In order to still be able to apply this method, we will first prove that the nonlocal PDE \eqref{eq:MainEquation} can be converted into a local first order PDE.

\subsection{Problem Equivalence}

\begin{proposition}\label{prop:DiffGlAnf}
	Let $G\in C^2(\cD)$ be a solution of Problem \ref{prob:main} with $\{1\} \times \R_{>0} \subset \mathring \cD$. Then, $G(1,t)=1$ for all $t\ge 0$ and $g(t) \coloneqq G_x(1,t)$ satisfies the initial value problem
	\begin{align}\label{eq:diffeqg}
		\left.
		\begin{array}{rl} g'(t) & = -n_d (g(t))^2-bg(t)+c \\
		g(0) &= \tilde h'(1)\\ \end{array}
		\right\}
	\end{align}
	with $b:=l_d+n_p+n_r\ge 0$ and $c:=2(l_p+l_r+m(n_p+n_r))\ge 0$.
\end{proposition}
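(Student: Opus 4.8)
The plan is to obtain the initial value problem \eqref{eq:diffeqg} by evaluating \eqref{eq:MainEquation}, and then its $x$-derivative, along the spatial slice $x=1$: the explicit factors $(x-1)$ in \eqref{eq:MainEquation} force all the problematic terms to collapse there, so the PDE degenerates into scalar ODEs along $x=1$. First I would prove $G(1,t)=1$. Setting $x=1$ in \eqref{eq:MainEquation}, the entire bracket multiplying $G_x(x,t)$ carries a factor $(x-1)$ and hence vanishes, as does the first term in the bracket multiplying $G(x,t)$, leaving $G_t(1,t)=-(n_r+n_p)G(1,t)+(n_r+n_p)$, i.e.\ $\frac{\mathrm d}{\mathrm dt}(G(1,t)-1)=-(n_r+n_p)(G(1,t)-1)$ on $(0,\infty)$. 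Since $G\in C^2(\cD)$ and $\{1\}\times\R_{>0}\subset\mathring\cD$, the map $t\mapsto G(1,t)$ is $C^1$ on $(0,\infty)$ and continuous up to $t=0$, where $G(1,0)=\tilde h(1)=\sum_{k=0}^\infty\tilde p_k=1$ by \eqref{eq:InitialCond}. Uniqueness of solutions for this linear scalar ODE then forces $G(1,t)\equiv 1$.

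Next I would differentiate \eqref{eq:MainEquation} with respect to $x$ and set $x=1$. Abbreviating the equation as $G_t=A(x,t)G_x+B(x,t)G+C(x)$ with $A,B,C$ the coefficient functions read off from \eqref{eq:MainEquation}, differentiation gives $G_{tx}=A_xG_x+AG_{xx}+B_xG+BG_x+C'$. The crucial point is that $A$ contains the factor $(x-1)$, so $A(1,t)=0$ and the second-order term $A(1,t)G_{xx}(1,t)$ disappears; this is precisely what makes the resulting equation self-contained, since the only surviving instance of the nonlocal value $G_x(1,t)$ is $g(t)$ itself. Using $G_{tx}(1,t)=G_{xt}(1,t)=g'(t)$ (valid as $G\in C^2$), inserting $G(1,t)=1$ and $G_x(1,t)=g(t)$, and computing the elementary quantities $A_x(1,t)=\frac{2l_p+n_pm}{g(t)}-\omega_r-l_d-n_dg(t)$, $B(1,t)=-(n_r+n_p)$, $B_x(1,t)=\omega_rg(t)+2l_r+n_rm$, $C'(1)=(n_r+n_p)m$, I would collect terms: the $\omega_rg(t)$ contributions cancel, the factor $g(t)$ clears the denominator in $A_x(1,t)g(t)$, the quadratic part is $-n_d(g(t))^2$, the linear coefficient is $-(l_d+n_r+n_p)=-b$, and the constant is $2l_p+2l_r+2m(n_p+n_r)=c$. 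This yields $g'(t)=-n_d(g(t))^2-bg(t)+c$. The initial condition is immediate: differentiating $G(x,0)=\tilde h(x)$ from \eqref{eq:InitialCond} in $x$ and evaluating at $x=1$ gives $g(0)=G_x(1,0)=\tilde h'(1)$.

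The computation itself is mechanical bookkeeping, so the genuine care is only needed at the boundary $t=0$: one must know that $G_x$ extends continuously to $(1,0)$ so that $g(0)$ is meaningful and equals $\tilde h'(1)$, which tacitly requires $\tilde h$ to be differentiable at $x=1$, i.e.\ the underlying degree distribution to have finite mean — in line with the remarks on the convergence radius of $\sum_k\tilde p_kx^k$ in the introduction. A secondary point is to confirm that the factor $(x-1)$ in $A$ is the \emph{only} source of second-order dependence, so that no $G_{xx}$ term survives the evaluation at $x=1$; this is immediate from the explicit form of \eqref{eq:MainEquation} but is exactly what permits the reduction to an ODE. One also implicitly uses $g(t)\neq 0$, both for \eqref{eq:MainEquation} to be well defined and so that the factor $g(t)$ can cancel the $1/G_x(1,t)$ term, leaving the polynomial right-hand side of \eqref{eq:diffeqg}. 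If one wishes, the discussion can be rounded off by observing that this right-hand side is a polynomial, hence locally Lipschitz, so the IVP \eqref{eq:diffeqg} has a unique solution.
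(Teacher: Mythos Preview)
Your proof is correct and follows essentially the same route as the paper: evaluate \eqref{eq:MainEquation} at $x=1$ to obtain the linear ODE for $G(1,t)$ with initial value $\tilde h(1)=1$, then differentiate \eqref{eq:MainEquation} in $x$, set $x=1$, use $G_{tx}=G_{xt}$ and $G(1,t)=1$, and collect terms. Your explicit $A,B,C$ bookkeeping and the remarks on regularity at $t=0$ and on $g(t)\neq 0$ are a bit more detailed than the paper's version, but the argument is the same.
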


\begin{proof}
If we insert $x=1$ into~\eqref{eq:MainEquation}, we immediately obtain the differential equation
\begin{align*}
G_t(1,t)= n_r+n_p-(n_r+n_p)G(1,t),
\end{align*}
describing the value of $G$ at $x=1$. Because the initial value is given by $G(1,0) = \tilde h(1) =1$, the unique solution of this differential equation is given by $G(1,t)=1$ for all $t\ge 0$. In order to show the differential equation for $g$ we differentiate~\eqref{eq:MainEquation} once with respect to $x$, and then set $x=1$. Using $\frac{\mathrm d}{\mathrm dx} G_{t} = \frac{\mathrm d}{\mathrm dt} G_{x}$, we infer the differential equation for $g$:
\begin{align*}
		\frac{\mathrm d}{\mathrm dt} g(t) &= \left[\left( \omega_p+\frac{2l_p+n_p m}{g(t)}\right)-\omega_r-\omega_p-l_d-n_dg(t)\right]g(t)\\ 		
		&\quad +\left(\omega_rg(t)+2l_r+n_rm\right)+\left[-n_r-n_p\right] g(t)+(n_r+n_p)m\\
		&=2l_p+n_pm+(2l_r+n_rm)+n_rm+n_pm\\
		&\quad -\left(\omega_r+l_d -\omega_r+n_r+n_p \right)g(t)-n_d(g(t))^2\\
		&=-n_d(g(t))^2-b g(t)+c,
\end{align*}
where we have used that $G(1,t) = 1$ for all $t\ge 0$. This is the desired equation.
\end{proof}

For the following analysis it helps to simplify the notation:

\begin{definition}
\label{def:H}
Let $g\in C^k(\R_{\ge 0})$ be a positive function. We define the function $H\colon \R^3\times \R_{\ge 0}\to \R$ by
\begin{align*}
		H(a,b,c,d) &\coloneqq (c-1)\left[c\left(\omega_p+\frac{2l_p+n_pm}{g(d)}\right)-\omega_r-\omega_p-l_d-n_dg(d)\right]a\\
		&+\left( (c-1)(\omega_r g(d)+2l_r+n_rm)-n_r-n_p\right) b +(n_r+n_p)c^m
	\end{align*}
\end{definition}

From now on we assume that $g\in C^k(\R_{\ge 0})$ for any $k\ge 2$ throughout this section. We now face the following problem:

\begin{problem}\label{prob:H}
We look for functions $G\colon \cD\subset \R\times \R_{\ge 0}\to \R$ with $\{ 1\}\times \R_{>0} \subset \mathring \cD$ such that
	\begin{align}
		\label{eq:HG}
		G_t = H(G_x,G,x,t) & \quad \text{for }(x,t)\in \cD\\
		\label{eq:HGini}		
		G(x,0)= \sum\limits_{k=0}^\infty p_k x^k \eqqcolon h(x) & \quad \text{for }(x,0)\in \cD
	\end{align}
	where $p_k\in [0,1],\ \sum\limits_{k=0}^\infty p_k=1$.
\end{problem}

\begin{lemma}[Problem Equivalence]\label{lem:probequiv}
	Suppose $\{1\}\times \R_{>0}\subset \mathring \cD$ and $\tilde h = h$. Then a function $G\in C^2(\cD)$ is a solution of Problem \ref{prob:main} if and only if it is a solution of Problem \ref{prob:H} with $g$ satisfying the initial value problem \eqref{eq:diffeqg}.
\end{lemma}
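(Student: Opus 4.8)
The plan is to prove the two implications of the equivalence separately, using Proposition~\ref{prop:DiffGlAnf} for the forward direction and a direct substitution for the backward direction.

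\textbf{Forward direction.} Suppose $G \in C^2(\cD)$ solves Problem~\ref{prob:main}. First I would invoke Proposition~\ref{prop:DiffGlAnf}: since $\{1\}\times\R_{>0}\subset\mathring\cD$, we get $G(1,t)=1$ for all $t\ge 0$ and the function $g(t)\coloneqq G_x(1,t)$ satisfies the initial value problem~\eqref{eq:diffeqg}. In particular $g\in C^1(\R_{\ge 0})$; I would note that the regularity $g\in C^k$ for $k\ge 2$ assumed in this section either follows from bootstrapping the ODE~\eqref{eq:diffeqg} (its right-hand side is a polynomial in $g$, so smoothness of $g$ is immediate) or is simply the standing hypothesis. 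Now, for this specific $g$, the definition of $H$ in Definition~\ref{def:H} is set up precisely so that $H(G_x(x,t),G(x,t),x,t)$ equals the right-hand side of~\eqref{eq:MainEquation}: comparing term by term, the coefficient $(x-1)[x(\omega_p + (2l_p+n_pm)/G_x(1,t)) - \omega_r-\omega_p-l_d - n_d G_x(1,t)]$ multiplying $G_x(x,t)$ matches the $a$-coefficient with $a=G_x$, $c=x$, $d=t$ and $g(d)=g(t)=G_x(1,t)$; likewise the $G(x,t)$-coefficient and the inhomogeneous term $(n_r+n_p)x^m$ match. Hence~\eqref{eq:MainEquation} becomes exactly $G_t = H(G_x,G,x,t)$, which is~\eqref{eq:HG}. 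Since $\tilde h = h$, the initial condition~\eqref{eq:InitialCond} is literally~\eqref{eq:HGini}, so $G$ solves Problem~\ref{prob:H} with $g$ solving~\eqref{eq:diffeqg}.

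\textbf{Backward direction.} Conversely, suppose $G\in C^2(\cD)$ solves Problem~\ref{prob:H} where $g$ solves the initial value problem~\eqref{eq:diffeqg}. The subtlety here is that in Problem~\ref{prob:H} the symbol $g$ appearing inside $H$ is, a priori, an externally prescribed function, whereas in Problem~\ref{prob:main} the nonlocal coefficient is $G_x(1,t)$ itself; so I must show these coincide, i.e. that $G_x(1,t) = g(t)$ for all $t\ge 0$. To do this I would first evaluate~\eqref{eq:HG} at $x=1$: from Definition~\ref{def:H}, $H(a,b,1,d) = -(n_r+n_p)b + (n_r+n_p)$, so $G_t(1,t) = (n_r+n_p)(1 - G(1,t))$; combined with $G(1,0)=h(1)=1$ this ODE forces $G(1,t)=1$ for all $t\ge 0$. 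Next I differentiate~\eqref{eq:HG} with respect to $x$ and set $x=1$, using $\partial_x\partial_t G = \partial_t\partial_x G$ (valid since $G\in C^2$). Writing $\tilde g(t)\coloneqq G_x(1,t)$, this computation — structurally identical to the one in the proof of Proposition~\ref{prop:DiffGlAnf} but now with the externally given $g$ sitting in the coefficients — yields an equation of the form $\tilde g'(t) = -n_d\, g(t)\,\tilde g(t) - b_0\tilde g(t) + c_0$ together with contributions involving $g(t)$ in the inhomogeneous part; carefully collecting terms I would show this reduces to the statement that $\tilde g$ satisfies the same ODE that $g$ does once one substitutes $g=\tilde g$, and more directly that $\tilde g(t) = g(t)$ is forced. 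Cleanest: evaluate at $t=0$ to get $\tilde g(0) = h'(1) = \tilde h'(1) = g(0)$; then differentiating~\eqref{eq:HG} in $x$ at $x=1$ gives an ODE for $\tilde g$ whose right-hand side, after using $G(1,t)=1$, is exactly $-n_d(g(t))^2 - b g(t) + c$ — i.e. $\tilde g'(t) = g'(t)$ pointwise — wait, I should double check the $g$-versus-$\tilde g$ bookkeeping: the term $-n_d g(d) a$ with $a = G_x$ evaluated at $x=1$ gives $-n_d g(t)\tilde g(t)$, and the term $\omega_r g(d) b$ with $b=G$ gives $\omega_r g(t)$, so one gets $\tilde g'(t) = -n_d g(t)\tilde g(t) - (l_d+n_r+n_p)\tilde g(t) + (\omega_r - \omega_r)g(t) + \text{const} = -n_d g(t)\tilde g(t) - b\tilde g(t) + c$. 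Since $g$ solves $g' = -n_d g^2 - bg + c = -n_d g\cdot g - bg + c$, the pair $(g,\tilde g)$ and $(g,g)$ solve the same linear (in the second argument) ODE with the same initial value $\tilde g(0)=g(0)$; by uniqueness $\tilde g \equiv g$, i.e. $G_x(1,t) = g(t)$. Substituting this back into~\eqref{eq:HG} turns it into~\eqref{eq:MainEquation}, and the initial conditions match since $h=\tilde h$, so $G$ solves Problem~\ref{prob:main}.

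\textbf{Main obstacle.} The only genuine difficulty is the bookkeeping in the backward direction: one must be careful that the $g$ inside $H$ is an independent datum and prove a posteriori that it agrees with $G_x(1,t)$, via the ODE-uniqueness argument sketched above. Everything else — the term-by-term matching of $H$ against the right-hand side of~\eqref{eq:MainEquation}, and the interchange of mixed partials — is routine given $G\in C^2(\cD)$ and $\{1\}\times\R_{>0}\subset\mathring\cD$.
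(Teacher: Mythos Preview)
Your overall strategy is exactly the paper's: forward direction via Proposition~\ref{prop:DiffGlAnf} plus term-matching, backward direction by first getting $G(1,t)=1$ from $x=1$, then differentiating in $x$ at $x=1$ to obtain a linear ODE for $\tilde g(t)\coloneqq G_x(1,t)$, verifying that $g$ itself solves it, and invoking uniqueness to conclude $\tilde g\equiv g$.

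One bookkeeping slip to flag: the linear ODE you write down for $\tilde g$, namely $\tilde g' = -n_d\,g\,\tilde g - b\tilde g + c$, is not the correct one. Carrying out the $x$-derivative of $H(G_x,G,x,t)$ at $x=1$ (with $G(1,t)=1$) actually gives
\[
\tilde g' = -n_d\,g\,\tilde g + \Bigl(\tfrac{2l_p+mn_p}{g}-\omega_r-l_d-n_r-n_p\Bigr)\tilde g + \omega_r g + 2l_r + m(2n_r+n_p),
\]
which is the paper's equation~\eqref{eq:diffeqGx1}. The term $\tfrac{2l_p+mn_p}{g}\tilde g$ does not disappear, and the two $\omega_r$-contributions are $-\omega_r\tilde g$ and $+\omega_r g$, which only cancel \emph{after} one knows $\tilde g=g$. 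This does not damage your argument: the displayed equation is still linear in $\tilde g$ with known coefficients, substituting $\tilde g=g$ reproduces~\eqref{eq:diffeqg}, and uniqueness for linear ODEs finishes the job --- precisely as the paper does.
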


\begin{proof}
Suppose first that $G\in C^2(\cD)$ is a solution of Problem \ref{prob:main}. Then, by Proposition \ref{prop:DiffGlAnf}, $g$ satisfies the initial value problem \eqref{eq:diffeqg}. After substitution of $g(t) = G_x(1,t)$ into \eqref{eq:MainEquation} we see that $G$ is a solution of Problem \ref{prob:H}. Now suppose that $G\in C^2(\cD)$ is a solution of Problem \ref{prob:H} with $g$ satisfying the initial value problem \eqref{eq:diffeqg}. In order to show that $G$ is a solution to Problem \ref{prob:main} we basically have to carry out the same calculations as in the proof of Proposition \ref{prop:DiffGlAnf}. First we set $x=1$ in \eqref{eq:HG}. This gives us
	\begin{align*}
		G_t(1,t) = n_r+n_p-(n_r+n_p)G(1,t).
	\end{align*}
	Since $G(1,0) = h(1) = 1$ it follows that $G(1,t)=1$ for all $t\ge 0$. Now differentiating \eqref{eq:HG} with respect to $x$, then setting $x=1$ and using the previous result $G(1,t)=1$ together with the fact that $\frac{\mathrm d}{\mathrm dt}G_x = \frac{\mathrm d }{\mathrm dx}G_t$ yields
	\begin{align}\label{eq:diffeqGx1}
		\begin{split}
		\frac{\mathrm d}{\mathrm dt} G_{x}(1,t) &= \frac{2l_p+n_pm}{g(t)}G_x(1,t)+2l_r+m(2n_r+n_p)\\
		&+G_x(1,t)(-\omega_r-l_d-n_r-n_p)+\omega_r g(t)-n_d g(t) G_x(1,t).
		\end{split}
	\end{align}
Differentiating the initial condition \eqref{eq:HGini} and fixing $x=1$ imposes the initial value $G_x(1,0) = h'(1)$. A short calculation shows that $G_x(1,t) = g(t)$ is a solution to this differential equation because $g$ satisfies \eqref{eq:diffeqg}. As the solution to \eqref{eq:diffeqGx1} for a given initial value is unique, we can infer that $g(t) = G_x(1,t)$ for all $t\ge 0$. Substituting that into \eqref{eq:HG} and using Definition \ref{def:H} yields that $G$ is a solution to Problem \ref{prob:main}.
\end{proof}

In contrast to the nonlocal Problem \ref{prob:main}, we now face the local first order Problem \ref{prob:H}. In order to solve it, we make use of the method of characteristics~\cite[Section 3.2]{Evans2010}.

\subsection{Applying the Method of Characteristics}

To apply the method of characteristics to Problem \ref{prob:H}, we define
\begin{align*}
	z(t)\coloneqq G(x(t),t),\quad  p^1(t)\coloneqq G_x(x(t),t),\quad p^2(t) \coloneqq G_t(x(t),t).
\end{align*}
The general characteristic equations in \cite[Section 3.2]{Evans2010} then turn into 
\begin{align}
	\dot x(t) &= -H_a,\\
	\dot p^1(t) &= H_c+H_bp^1(t),\\
	\dot p^2(t) &= H_d+H_b p^2(t),\\
	\dot z(t) &= -H_ap^1(t)+p^2(t),
\end{align}
where the partial derivatives of $H=H(a,b,c,d)$ are always evaluated at the point $(p^1(t), z(t),x(t),t)$. If we now substitute the function $H$ defined in Definition \ref{def:H} and its derivatives into the above equations, we obtain
\begin{subequations}
\label{eq:char}
\begin{align}
	\label{eq:xchar}
	\dot x(t) 	=& -(x(t)-1)\left( -l_d-\omega_p-\omega_r-n_d g(t)+\left(\omega_p+\frac{2l_p+mn_p}{g(t)}\right)x(t) \right), \\
	\label{eq:p1char}
	\begin{split}
	\dot p^1(t) =& p^1(t) \left(x(t) \left(\frac{2 l_p+mn_p}{g(t)}+\omega_p\right)-n_d g(t)-l_d-\omega_p-\omega_r\right)+m (n_p+n_r)x(t)^{m-1}\\ &+p^1(t) (x(t)-1)\left(\frac{2 l_p+mn_p}{g(t)}+\omega_p\right)+z(t) (\omega_r g(t)+2 l_r+m n_r)\\ & -\left(n_p+n_r-(2l_r+mn_r+\omega_r g(t))(x(t)-1)\right) p^1(t),
	\end{split}\\
	\label{eq:p2char}
	\begin{split}
	\dot p^2(t) =& -\frac{(x(t)-1)g'(t)}{g(t)^2}\bigg(p^1(t)x(t)(2l_p+mn_p)+(n_d p^1(t)-\omega_r z(t))g(t)^2\bigg)\\
				&-\left(n_p+n_r-(2l_r+mn_r+\omega_r g(t))(x(t)-1)\right)p^2(t),
	\end{split}\\
	\label{eq:zchar}
	\begin{split}
	\dot z(t) 	=& \left(  x(t)(2l_p+mn_p)-(l_d+\omega_p-x(t)\omega_p+\omega_r) g(t)-n_d g(t)^2  \right)\\
				&  \cdot \frac{1}{g(t)}(1-x(t))p^1(t) +p^2(t).
	\end{split} 
\end{align}
\end{subequations}
Now we note that the equation \eqref{eq:xchar} is independent of $p^1, p^2$ and $z$, and the equations \eqref{eq:p1char}, \eqref{eq:p2char}, \eqref{eq:zchar} can be rewritten in the form
\begin{align}\label{eq:matrixp1p2z}
	\begin{pmatrix}
		\dot p^1(t) \\ \dot p^2(t)\\ \dot z(t)
	\end{pmatrix}
	= A(t)
	\begin{pmatrix}
		p^1(t) \\ p^2(t) \\ z(t)
	\end{pmatrix}+b(t),
\end{align}
where the matrix $A(t)$ and the vector $b(t)$ are defined implicitly via the previous equations \eqref{eq:p1char}, \eqref{eq:p2char} and \eqref{eq:zchar}. Both $A$ and $b$ may also depend on $x(t)$, but not on $p^1(t), p^2(t)$ or $z(t)$. There always is a unique solution of \eqref{eq:matrixp1p2z} for given initial values \cite[Theorem 3.7, 3.8, 3.10]{Teschl2000}. However, we must also find a projected characteristic, i.e., a solution of \eqref{eq:xchar}, which connects a given point $(x,t)$ in the upper half plane to a point on the boundary $\R\times \{t=0\}$, where the initial condition $h$ is defined. Therefore, we must find appropriate initial conditions
\begin{align}
	p^1(0) = p^1_0, \; p^2(0) = p^2_0,\; z(0) = z_0,\; x(0) = x_0.
\end{align}
A direct comparison with \eqref{eq:HGini} shows that the initial values for $z$ and $p^1$ are given by
\begin{align}\label{eq:zp1init}
	z_0=h(x_0),\quad p_0^1=h'(x_0).
\end{align}
Since we also want the PDE \eqref{eq:HG} to hold, we should insist that
\begin{align}\label{eq:p2init}
	p^2_0 = H(p^1_0, z_0,x_0,0).
\end{align}
In our case for each $x_0\in \R$ where the initial condition \eqref{eq:HGini} is defined there is exactly one $z_0, p^1_0, p^2_0$ such that the above equations hold. To simplify the notation~(cf.~{\cite[Section 3.2.4]{Evans2010}}), we define
\begin{align}\label{eq:charInitial}
	q(x_0) \coloneqq (x_0,h'(x_0),H(h'(x_0),h(x_0),x_0,0),h(x_0))
\end{align}
as the initial condition for the ordinary differential equations (ODEs) given by~\eqref{eq:char}. We also introduce the notation
\begin{align}
	\begin{matrix}
		x(t) = x(q(x_0),t)&z(t) = z(q(x_0),t)\\
		p^1(t) = p^1(q(x_0),t) & p^2(t) = p^2(q(x_0),t)\\
	\end{matrix}
\end{align}
to display the dependence of the solution of \eqref{eq:char} on the initial condition $q(x_0)$. Moreover, as the solution to the projected characteristic $x(t)$ does not depend on $q^2$, $q^3$ and $q^4$ but only on $q^1=x_0$, we will also write
\begin{align}
x(t) = x(x_0,t).
\end{align}
Since \eqref{eq:xchar} nonlinear in $x$, we cannot a-priori expect to cover the whole upper half plane $\R\times \R_{\ge 0}$ with projected characteristics. Indeed, if we retrace the projected characteristic which passes through $(x,t)$ by replacing $t$ with $-t$, we might get a blow-up of $x(t)$ before we reach the time $t=0$. However, as we show next, there is a sufficiently large subset of $\R\times \R_{\ge 0}$, which can be covered by characteristics.

\begin{theorem}[Global Invertibility] 
\label{thm:globalInvertibility}
Suppose that the initial condition~\eqref{eq:HGini}	is given by a generating function whose convergence radius $r$ is strictly greater than $1$. Then, there exists an open set $\cD$ which contains $[-1,1]\times \R_{> 0}$ such that for each $(\bar x,\bar t)\in \cD$ there exists a unique $x_0 \in (-r,r)$ such that
	\begin{align}\label{eq:globalInvertibility}
		\bar x = x(x_0,\bar t)
	\end{align}
	and the mapping $(\bar x,\bar t)\mapsto x_0$ is $C^k$.
\end{theorem}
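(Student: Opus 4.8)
The plan is to analyze the ODE~\eqref{eq:xchar} for the projected characteristic directly. First I would observe that~\eqref{eq:xchar} has the form $\dot x = -(x-1)\big(\alpha(t)x - \beta(t)\big)$ with $\alpha(t) = \omega_p + (2l_p+mn_p)/g(t)$ and $\beta(t) = l_d + \omega_p + \omega_r + n_d g(t)$, a Riccati equation in $x$. The two obvious special solutions are $x\equiv 1$ (since the right-hand side vanishes at $x=1$) and, when $\alpha(t) - \beta(t) = (2l_p+mn_p)/g(t) - l_d - \omega_r - n_d g(t)$ happens to vanish, also $x \equiv \beta(t)/\alpha(t)$; more robustly, $x\equiv 1$ is always an equilibrium of the spatial flow. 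The key structural point I want to extract is that the line $\{x=1\}$ is invariant and that, by uniqueness for the scalar ODE~\eqref{eq:xchar} (whose right-hand side is smooth in $x$ and continuous in $t$, using $g \in C^k$ and $g>0$ on $\R_{\ge 0}$), characteristics cannot cross $x=1$. Hence a characteristic started at $x_0 \in (-r,1)$ stays in $\{x<1\}$ and one started at $x_0 = 1$ stays on the line.

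Second, I would set up the map $\Phi\colon (x_0,t)\mapsto x(x_0,t)$ on a suitable domain and establish injectivity in $x_0$ for fixed $t$. Monotonicity is the cleanest route: differentiating~\eqref{eq:xchar} in $x_0$ gives a linear ODE for $w := \partial x/\partial x_0$, namely $\dot w = -\big(2\alpha(t)x(t) - \alpha(t) - \beta(t)\big)w$ with $w(0)=1$, so $w(t) = \exp\!\big(-\int_0^t (2\alpha x - \alpha - \beta)\,\mathrm ds\big) > 0$ for all $t$ in the interval of existence. Thus $x_0\mapsto x(x_0,t)$ is strictly increasing wherever defined, which gives uniqueness of $x_0$ in~\eqref{eq:globalInvertibility}, and the strict positivity of $w$ plus the implicit function theorem gives that the inverse $(\bar x,\bar t)\mapsto x_0$ is $C^k$ (the flow of~\eqref{eq:xchar} is $C^k$ in $(x_0,t)$ since the vector field is $C^k$ in $x$ and $C^{k-1}$, indeed $C^k$, in $t$).

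Third — and this is where the hypothesis $r>1$ is essential — I would prove that for each $\bar t>0$ the characteristics emanating from $x_0\in(-r,1]$ sweep out an interval of $\bar x$-values that contains $[-1,1]$, and are defined for all time. Since $x\equiv 1$ is a bounded solution, any characteristic with $x_0<1$ is trapped below $1$; I need a lower bound preventing blow-up to $-\infty$ in finite backward/forward time. Near $x_0$ slightly less than $1$, continuity of the flow gives $x(x_0,\bar t)$ close to $1$, so values just below $1$ are attained; I then need to show that as $x_0$ decreases toward $-r$ (or toward the edge of the maximal existence set), $x(x_0,\bar t)$ decreases past $-1$. The right-hand side of~\eqref{eq:xchar} is $O(x^2)$ for large $|x|$, so backward-in-time a characteristic through a very negative $\bar x$ could blow up; the content of the theorem is that this does not happen as long as $\bar x\ge -1$ and the starting point is allowed to range in $(-r,r)$ with $r>1$. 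Concretely I would fix $\bar t$, run~\eqref{eq:xchar} backward from $(\bar x,\bar t)$ with $\bar x\in[-1,1]$, and show the solution reaches $t=0$ with $x_0\in(-r,r)$: the bound $|\bar x|\le 1 < r$ at time $\bar t$ together with the invariant line $x=1$ and a Gr\"onwall-type estimate on $\{|x|\le r\}$ (where the vector field is Lipschitz in $x$ uniformly on compact $t$-intervals, using positivity and smoothness of $g$) keeps the backward trajectory inside $(-r,r)$ for all $s\in[0,\bar t]$. Assembling these trajectories over $\bar x\in[-1,1]$ and $\bar t>0$, and adding a small open neighborhood using continuous dependence, defines the open set $\cD\supset[-1,1]\times\R_{>0}$ and the $C^k$ inverse map.

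\textbf{Main obstacle.} The delicate part is the no-blow-up / covering argument in the third step: showing that the backward characteristic through $(\bar x,\bar t)$ with $|\bar x|\le 1$ cannot escape to infinity before $t=0$ and in fact lands inside $(-r,r)$. This is exactly where $r>1$ enters — it provides the slack between the target strip $|\bar x|\le 1$ and the a priori barrier $|x|=r$ inside which the Riccati nonlinearity is controlled by a linear Gr\"onwall bound. Handling the time-dependence through $g(t)$ (and through $g'(t)$, which appears once we differentiate) requires only that $g$ is $C^k$ and bounded away from $0$ on compact intervals, which is guaranteed by~\eqref{eq:diffeqg} on any interval where $g$ stays positive; I would note that the statement implicitly restricts attention to such intervals, or equivalently to the connected component of $\R_{>0}$ on which $g>0$.
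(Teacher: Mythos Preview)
Your first two steps are fine and in fact more explicit than the paper: writing out the variational equation for $w=\partial x/\partial x_0$ to get strict monotonicity, and then invoking the implicit function theorem, is a clean way to obtain both uniqueness of $x_0$ and $C^k$ regularity of the inverse. The paper simply appeals to non-crossing of trajectories in the extended phase space, which is the same observation packaged differently.

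The gap is in your third step. A Gr\"onwall estimate on $\{|x|\le r\}$ only controls growth \emph{exponentially}: it tells you that as long as the backward trajectory remains in $\{|x|\le r\}$, one has $|x(s)|\le C\,e^{L(\bar t - s)}$ for some Lipschitz constant $L$. This does \emph{not} prevent the trajectory from exiting $(-r,r)$ --- for large $\bar t$ the exponential bound vastly exceeds $r$, so the argument does not close. In particular your reading of the hypothesis $r>1$ as ``slack'' for a Gr\"onwall bound is not the mechanism at work here.

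What actually works, and what the paper does, is to notice that $[-1,1]$ itself is positively invariant for the backward flow. You already have the invariant line $x=1$. The missing observation is the barrier at $x=-1$: for the reversed field
\[
f(\tilde x,\tilde t)=(\tilde x-1)\big(\alpha(\tilde t)\,\tilde x - \beta(\tilde t)\big),
\]
with $\alpha,\beta\ge 0$, one has $f(\tilde x,\tilde t)\ge 0$ for every $\tilde x\le -1$, since both factors are non-positive there. Hence a backward trajectory starting from $\bar x\in[-1,1]$ can never escape through $-1$; it stays in $[-1,1]$, exists for all backward time, and lands at some $x_0\in[-1,1]$. The hypothesis $r>1$ enters only at this very last point, to guarantee $[-1,1]\subset(-r,r)$ so that the initial condition $h$ is defined and smooth at $x_0$. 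No growth estimate is needed.
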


\begin{proof}
We define $\cD$ by
\begin{align*}
\cD\coloneqq \{ (x,t)\in \R\times\R_{\ge 0}: (x,t) = (x(x_0,\bar t),\bar t),\ x(x_0,\cdot ) \text{ solves }\eqref{eq:xchar}, |x_0|<r,\bar t>0 \},
\end{align*}
as shown in Figure \ref{fig:CharacteristicsDandTrappingRegion}(a). Because \eqref{eq:xchar} is locally Lipschitz continuous, trajectories cannot cross each other in the extended phase space. As we can consider our domain $\cD$ as part of the extended phase space, for each $(\bar x,\bar t)\in \cD$ there exists at most one $x_0\in (-r,r)$ such that \eqref{eq:globalInvertibility} is satisfied. Furthermore, the existence of such a $x_0$ follows directly from the definition of $\cD$. Moreover, $\cD$ is open and the mapping defined in the theorem is $C^k$. It remains to show that $[-1,1]\times \R_{>0}$ is contained in $D$.\\

Consider any $(\bar x,\bar t)\in [-1,1]\times\R_{>0}$. We will show that $(\bar x,\bar t)\in \cD$ by reversing the independent variable $t$ and then retracing the characteristic to find a point where it intersects with $[-1,1]\times \{0\}$. So if we set $\tilde t = -t$ and $\tilde x=x$, \eqref{eq:xchar} becomes
\begin{align}
\label{eq:xcharRev}
\frac{\mathrm d}{d \tilde t}\tilde x(\tilde t)	&= (\tilde x(\tilde t)-1)\left( -l_d-\omega_p-\omega_r-n_d g(\tilde t)+\left(\omega_p+\frac{2l_p+mn_p}{g(\tilde t)}\right)\tilde x(\tilde t) \right)\eqqcolon f(\tilde x,\tilde t).
\end{align}
If we solve this equation with initial condition $\tilde x(-\bar t) = \bar x$, then $\tilde x(\tilde t)\in [-1,1]$ for all $\tilde t \in [-\bar t ,0]$. This is because for $\tilde x\le -1$ we have $f(\tilde x,\tilde t)\ge 0$ and for $\tilde x=1$ $f(\tilde x,\tilde t)=0$. So as trajectories are not allowed to cross, the solution $\tilde x(\tilde t)$ has to stay in $[-1,1]$, thus especially $\tilde x(0)\in [-1,1]$; see also Figure \ref{fig:CharacteristicsDandTrappingRegion}(b). So after having chosen $x_0=\tilde x(0)$ we can conclude that this is the required initial value to reach $(\bar x,\bar t)$.
\end{proof}

\begin{figure}
	\centering
	\begin{overpic}[width = 0.9\textwidth]
		{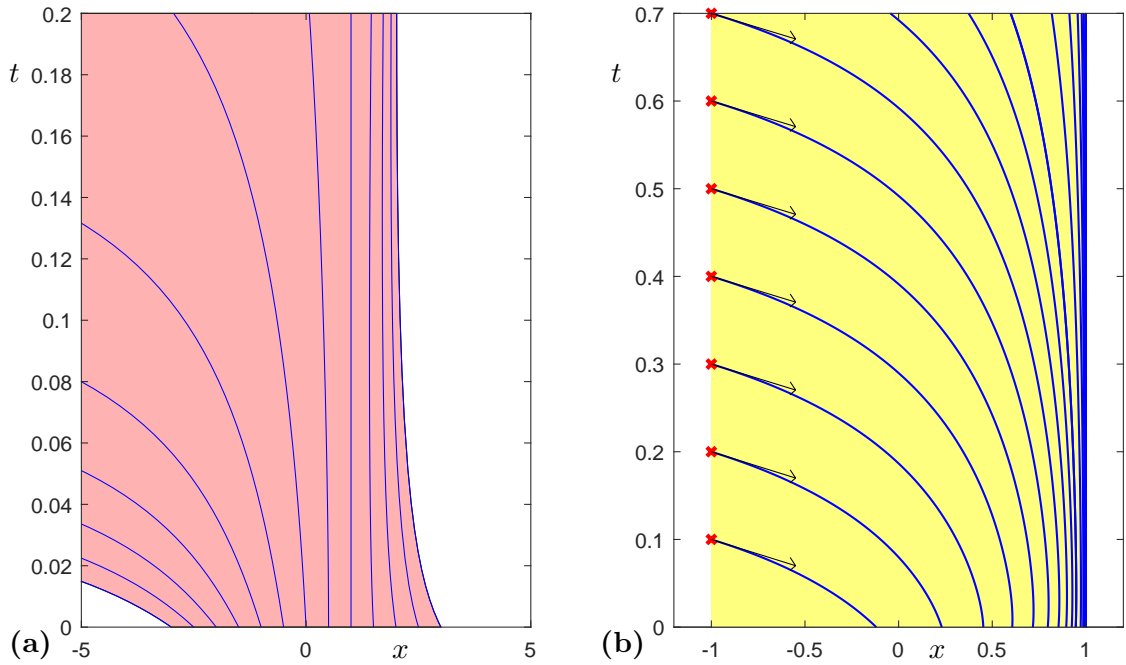}
		\put(-2,0.5){\textbf{(a)}}		
		\put(52,0.5){\textbf{(b)}}
		\put(33,0){$x$}
		\put(-2,53){$t$}
		\put(82,0){$x$}
		\put(53,53){$t$}
	\end{overpic}
	\captionsetup{aboveskip=7pt}
	\caption{\label{fig:CharacteristicsDandTrappingRegion} (a) The region $\cD$ in red. (b) The trapping region for the characteristics in yellow. If $\tilde x(\bar t)\in [-1,1]$ then $\tilde x$ stays in the yellow region upon increasing $\tilde t$. (Parameter values: $m=3, n_d=1, \omega_p = 1, l_p = 0, n_p = 1, \omega_r = 1, l_d = 1, n_r = 1, l_r=1$, Initial condition: $h(x) = \frac{2}{3} \sum\limits_{k=0}^\infty \left(\frac{x}{3}\right)^k$ ).}
\end{figure}

So we just proved that there is a certain part $\cD$ of the upper half plane such that each point $(x,t)\in \cD$ gets covered by an unique projected characteristic starting at $x_0(x,t)$. As usual when solving a PDE with the method of characteristics, we now define
\begin{align}
	\label{eq:charSol}
	G(x,t) & \coloneqq  z(q(x_0(x,t)),t)\\
	p^1(x,t) &\coloneqq p^1(q(x_0(x,t)), t)\\
	p^2(x,t) &\coloneqq p^2(q(x_0(x,t)), t)
\end{align}
for all $(x,t)\in \cD$ and all $(x,0)\in \partial \cD$. Now we get to the main result of this section.

\subsection{Existence Results}

\begin{theorem}[Global Existence Theorem for the Local Problem]
\label{thm:globexisloc}
Assume that the initial condition \eqref{eq:HGini} is given by a generating function whose convergence radius is strictly greater than $1$. Suppose additionally that $g\in C^k(\R_{\ge 0})$ with $g>0$. Then, the function $G$ is well defined by \eqref{eq:charSol}. Furthermore, it is $k$ times continuously differentiable and solves the Problem \ref{prob:H} on an open domain $\cD$ containing $[-1,1]\times \R_{>0}$.
\end{theorem}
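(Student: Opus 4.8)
The plan is to run the classical verification argument for first-order PDEs solved via characteristics, in the spirit of \cite[Section 3.2.4]{Evans2010}, exploiting that the subsystem \eqref{eq:p1char}--\eqref{eq:zchar} is linear in $(p^1,p^2,z)$. Write $\xi(x_0,t):=x(x_0,t)$, $\zeta(x_0,t):=z(q(x_0),t)$ and $\pi^i(x_0,t):=p^i(q(x_0),t)$, so that \eqref{eq:charSol} reads $G(\xi(x_0,t),t)=\zeta(x_0,t)$ on $\cD$. First I would settle well-definedness and regularity: by Theorem~\ref{thm:globalInvertibility} the map $(x,t)\mapsto x_0(x,t)$ is well defined and $C^k$ on $\cD$, so $G,p^1,p^2$ in \eqref{eq:charSol} make sense; the right-hand sides of \eqref{eq:char} are polynomial in the state $(x,p^1,p^2,z)$ with coefficients that are $C^k$ in $t$ (since $g\in C^k(\R_{\ge0})$ and $g>0$, whence $1/g\in C^k$), so by standard smooth-dependence results \cite[Theorem 3.7, 3.8, 3.10]{Teschl2000} the flow of \eqref{eq:char} is $C^k$ jointly in the time and the initial data; and the initial map $q$ from \eqref{eq:charInitial} is $C^\infty$ because $h$ is real-analytic on $(-r,r)$ and $H(\cdot,\cdot,\cdot,0)$ is polynomial in its first three arguments. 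Composing these maps, and then composing with the $C^k$ map $x_0(\cdot,\cdot)$, yields $G,p^1,p^2\in C^k(\cD)$, while $G(x,0)=h(x)$ is immediate from $x_0(x,0)=x$ and \eqref{eq:zp1init}.

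The core of the proof consists of two compatibility identities along the characteristics. First, $\pi^2(x_0,t)=H(\pi^1(x_0,t),\zeta(x_0,t),\xi(x_0,t),t)$: setting $v(t)$ to the difference of the two sides, one has $v(0)=0$ by \eqref{eq:p2init}, and differentiating and substituting the equations \eqref{eq:char} gives $\dot v\equiv0$ (the $H_aH_c$ and $H_aH_bp^1$ contributions cancel and what remains, $H_bp^2+H_d$, is exactly $\dot p^2$), hence $v\equiv0$. The second identity --- which I expect to be the main obstacle --- is $G_x(\xi,t)=\pi^1$ and $G_t(\xi,t)=\pi^2$. Differentiating $G(\xi,t)=\zeta$ in $x_0$ gives $G_x(\xi,t)\xi_{x_0}=\zeta_{x_0}$, and in $t$ gives $G_x(\xi,t)\dot\xi+G_t(\xi,t)=\dot\zeta=\dot\xi\,\pi^1+\pi^2$ by \eqref{eq:xchar} and \eqref{eq:zchar}; so it suffices to show $w:=\zeta_{x_0}-\pi^1\xi_{x_0}\equiv0$. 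Here $w(x_0,0)=0$ because $\zeta_{x_0}(x_0,0)=h'(x_0)=\pi^1(x_0,0)$ and $\xi_{x_0}(x_0,0)=1$; and differentiating $w$ in $t$ --- using that $\dot\xi=-H_a$ depends only on $(\xi,t)$, that $\dot p^1=H_c+H_bp^1$, and eliminating $\pi^2_{x_0}$ via the first identity as $\pi^2_{x_0}=H_a\pi^1_{x_0}+H_b\zeta_{x_0}+H_c\xi_{x_0}$ --- the many terms collapse to $\partial_t w=H_b(\xi,t)\,w$. Uniqueness (Gronwall) for this scalar linear ODE in $t$ forces $w\equiv0$; and since $\xi_{x_0}$ itself solves a scalar linear ODE in $t$ with $\xi_{x_0}(x_0,0)=1$ it never vanishes, so $G_x(\xi,t)=\pi^1$, after which the $t$-derivative relation gives $G_t(\xi,t)=\pi^2$.

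To conclude, fix $(x,t)\in\cD$ and set $x_0=x_0(x,t)$; combining the above, $G_t(x,t)=\pi^2(x_0,t)=H(\pi^1(x_0,t),\zeta(x_0,t),x,t)=H(G_x(x,t),G(x,t),x,t)$, which is \eqref{eq:HG}, and with $G(x,0)=h(x)$ this shows $G$ solves Problem~\ref{prob:H}; openness of $\cD$ and the inclusion $[-1,1]\times\R_{>0}\subset\cD$ are part of Theorem~\ref{thm:globalInvertibility}. The only genuinely laborious step is the verification of the identity $\partial_t w=H_b(\xi,t)\,w$; the remaining arguments are bookkeeping together with standard ODE theory.
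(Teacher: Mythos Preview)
Your proposal is correct and follows essentially the same route as the paper: establish well-definedness via Theorem~\ref{thm:globalInvertibility}, verify $C^k$ regularity by decomposing $G$ as a composition of the $C^k$ maps $(x,t)\mapsto x_0$, $x_0\mapsto q(x_0)$, and the flow of \eqref{eq:char}, check the initial condition directly, and then verify that $G$ solves \eqref{eq:HG}. The only substantive difference is that the paper dispatches the last step by citing \cite[Section 3.2, Theorem 2]{Evans2010}, whereas you carry out the verification explicitly --- proving first $\pi^2=H(\pi^1,\zeta,\xi,t)$ and then $w:=\zeta_{x_0}-\pi^1\xi_{x_0}\equiv0$, exploiting the linearity of $H$ in $(a,b)$ (so that $H_a$ depends only on $(\xi,t)$) to reduce the computation of $\partial_t w$ to the clean form $\partial_t w=H_b\,w$; this is precisely the specialization of Evans' argument to the semilinear case, and your bookkeeping checks out.
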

$ $\\ \\
\begin{proof} $ $ \\
	\underline{Step 1: Well-definedness}
	Let $(\bar x,\bar t)\in \cD$. The well-definedness of $G(\bar x,\bar t)$ in \eqref{eq:charSol} follows from the uniqueness of a $x_0\in (-r,r)$ such that $\bar x=x(x_0,\bar t)$, the uniqueness of a initial condition for \eqref{eq:char} for given $x_0$ and the fact that the characteristic ODEs \eqref{eq:char} are uniquely solvable.\\
	
	\underline{Step 2: Differentiability}
	Remember that for a given pair $(x,t)\in D$ we defined $G(x,t)$ in the following way:
	\begin{align}\label{eq:intermediateGdef}
		(x,t)\overset{(i)}{\longmapsto} (x_0(x,t),t)\overset{(ii)}{\longmapsto} (q(x_0(x,t)),t)\overset{(iii)}{\longmapsto} z(q(x_0(x,t)),t)\eqqcolon G(x,t)
	\end{align}
	As Theorem \ref{thm:globalInvertibility} shows, the mapping $(i)$ is $C^k$. Furthermore, we recall that the initial condition $q(x_0)$ of the ODE \eqref{eq:char} is given by $q(x_0)=(x_0,h'(x_0),H(h'(x_0), h(x_0), x_0,0), h(x_0))$. Using that $h$ is a power series and the definition of $H$ it can easily be checked that the mapping $(ii)$ is $k$-times continuously differentiable. Finally, it is a well known fact, that $(iii)$ is of class $C^k$; see for example \cite[Theorem 9.7]{Amann1990}. As a result, $G$, which is the composition of $(i), (ii)$ and $(iii)$, is itself an element of $C^k(\cD)$.\\
	
	\underline{Step 3: Boundary conditions}
	We have that $G(x,0) = z(q(x_0(x,0)),0) = z(q(x),0) = h(x)$ by \eqref{eq:zp1init}. So $G$ satisfies the boundary conditions \eqref{eq:HGini}.\\
	
	\underline{Step 4: Solution inside the domain}
	This part of the proof is similar to standard proofs that a function $G$, which is defined as in \eqref{eq:charSol}, locally solves the PDE \eqref{eq:HG}. As in our case characteristics do not cross and each point in the domain $D$ can be reached by a characteristic, these proofs extend to global solvability. One of these proofs can be found in \cite[Section 3.2, Theorem 2]{Evans2010}.
\end{proof}

\begin{remark}
One could attempt to prove a slightly stronger version of Theorem \ref{thm:globexisloc}. In fact, it can be shown that if $g\in C^\omega(\R_{\ge 0})$ then also $G\in C^\omega(\cD)$. However, this does not show that the property of $G(x,t)$ being a power series for $t=0$ with respect to $x$ also holds for later times. To show this, a more profound analysis of the intermediate steps in the definition of $G$ in \eqref{eq:intermediateGdef} is required. In particular, one has to show that
\begin{enumerate}
	\item \label{item:step1} for every $\bar t>0$ the map $(x,\bar t)\mapsto x_0(x,\bar t)$ is a power series in $x\in [-1,1]$ around $x=0$, which as we already know only takes values in the interval $[-1,1]$,
	\item \label{item:step2} the map $x_0\mapsto q(x_0)$ is a power series for $x_0\in [-1,1]$,
	\item \label{item:step3} the map $(\xi,\bar t)\mapsto z(\xi,\bar t)$ is a power series in $\xi$.
\end{enumerate}
As we know that $x(t)\equiv 1$ is a solution to the characteristic equation \eqref{eq:xchar}, we can explicitly calculate other solutions which leads us to the fact that the first step in \eqref{eq:intermediateGdef} is a power series. It also directly follows from the definition of $q$ that the second step is also given by a power series. The difficulty, however, lies in showing that the map in step \ref{item:step3} is a power series.
\end{remark}

To sum up, under the assumptions
\begin{itemize}
	\item $g\in C^k(\R_{\ge 0})$, $g(t)>0$,
	\item The initial condition is given by a generating function whose convergence radius $r$ is strictly greater than $1$,
\end{itemize}
we have proved that there exists an open set $\cD$, which contains $[-1,1]\times \R_{>0}$, and a function $G\in C^k(\cD)$ which is a solution of Problem \ref{prob:H}. Because all the projected characteristics lie completely in $\cD$ and do not intersect and because the ODE \eqref{eq:xchar} is uniquely solvable, we obtain that $G$ is even the unique $C^2(\cD)$ solution \cite[Section 3.2, Theorem 1]{Evans2010}.

\begin{theorem}[Global Existence Theorem for the Nonlocal Problem]\label{thm:globexisnonloc}
	Suppose that the convergence radius of $\tilde h$ in \eqref{eq:InitialCond} is strictly greater than $1$ and $\tilde h'(1)\neq 0$. Then, there exists an open set $\cD$, which contains $[-1,1]\times \R_{>0}$, and a function $G\in C^\infty(\cD)$ which is a solution of the nonlocal Problem \ref{prob:main}.
\end{theorem}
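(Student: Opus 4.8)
The plan is to combine the problem-equivalence result of Lemma~\ref{lem:probequiv} with the existence result for the local problem, Theorem~\ref{thm:globexisloc}, after first verifying that the auxiliary function $g$ arising from the nonlocal term is well-behaved. The strategy is: (i) solve the scalar ODE~\eqref{eq:diffeqg} for $g$ and show that, under the hypothesis $\tilde h'(1)\neq 0$, the solution $g$ is defined for all $t\ge 0$, is real-analytic, and remains strictly positive; (ii) feed this $g$ into Definition~\ref{def:H} and Theorem~\ref{thm:globexisloc} to obtain a $C^\infty$ (indeed $C^k$ for every $k$, hence $C^\infty$) solution $G$ of Problem~\ref{prob:H} on an open domain $\cD\supset[-1,1]\times\R_{>0}$; (iii) invoke Lemma~\ref{lem:probequiv}, noting $\tilde h=h$, to conclude that this $G$ solves the nonlocal Problem~\ref{prob:main}.

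The only genuinely new work is step (i), the qualitative analysis of $g'=-n_d g^2-bg+c$ with $g(0)=\tilde h'(1)$ and constants $b,c\ge 0$. I would argue as follows. Since $\tilde h(x)=\sum \tilde p_k x^k$ has all coefficients in $[0,1]$ summing to $1$ and convergence radius $>1$, the derivative $\tilde h'(1)=\sum_{k\ge 1} k\tilde p_k$ is a convergent sum of nonnegative terms, so $\tilde h'(1)\ge 0$; combined with the hypothesis $\tilde h'(1)\neq 0$ this gives $g(0)>0$. Now I distinguish cases on $n_d$. If $n_d>0$, the right-hand side $\phi(g):=-n_d g^2-bg+c$ is a downward parabola with exactly one positive root $g^*=\frac{-b+\sqrt{b^2+4n_d c}}{2n_d}\ge 0$ (and $g^*>0$ precisely when $c>0$); the interval $(0,\max(g(0),g^*))$ — or more carefully the forward-invariant interval bounded below by $\min(g(0),g^*)$ if $c>0$, and the analysis near $0$ when $c=0$ — shows the solution stays in a compact subset of $(0,\infty)$ for all $t\ge0$, hence is global and bounded away from $0$. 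One must handle the borderline case $c=0$ (so $g^*=0$): then $g'=-g(n_d g+b)<0$ while $g>0$, and since $g=0$ is an equilibrium it cannot be reached in finite time, so $g(t)>0$ for all $t$ and $g(t)\to 0$, but crucially remains strictly positive on every compact $t$-interval, which is all Theorem~\ref{thm:globexisloc} requires (its hypothesis is $g>0$ pointwise, not bounded below). If $n_d=0$, the equation is linear, $g'=-bg+c$, solved explicitly: if $b>0$ it converges monotonically to $c/b\ge 0$ from $g(0)>0$ staying positive; if $b=0$ then $g'=c\ge 0$ so $g(t)=g(0)+ct>0$. In all cases $g$ is real-analytic on $\R_{\ge0}$ (solution of an analytic autonomous ODE with no finite-time blow-up) and strictly positive, so in particular $g\in C^k(\R_{\ge0})$ with $g>0$ for every $k$.

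With step (i) in hand, steps (ii)–(iii) are essentially bookkeeping: apply Theorem~\ref{thm:globexisloc} for each $k\in\N$ to get $G\in C^k(\cD_k)$; since the domain $\cD$ in that theorem is constructed from the projected characteristics of~\eqref{eq:xchar}, which depend only on $g$ and not on $k$, one in fact gets a single domain $\cD$ and $G\in\bigcap_k C^k(\cD)=C^\infty(\cD)$; then Lemma~\ref{lem:probequiv} with $\tilde h=h$ transfers the solution from Problem~\ref{prob:H} to Problem~\ref{prob:main}, using that this particular $g$ satisfies~\eqref{eq:diffeqg} by construction.

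The main obstacle is the case analysis for positivity and global existence of $g$ in step (i) — in particular making sure the degenerate subcases ($c=0$, or $n_d=0$, or $b=0$) are not overlooked, and being precise that Theorem~\ref{thm:globexisloc} only needs pointwise positivity of $g$ rather than a uniform lower bound, so that the decaying case $c=0$, $g(t)\to 0^+$ still qualifies. A secondary point requiring a line of justification is that $\tilde h'(1)\ge 0$ is finite and well-defined, which follows from the convergence radius exceeding $1$ so that term-by-term differentiation is valid at $x=1$.
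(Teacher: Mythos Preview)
Your proposal is correct and follows essentially the same three-step route as the paper's proof: establish that the auxiliary function $g$ solving \eqref{eq:diffeqg} is globally defined, smooth, and strictly positive; apply Theorem~\ref{thm:globexisloc} to obtain a $C^\infty$ solution of the local Problem~\ref{prob:H}; then use Lemma~\ref{lem:probequiv} to transfer this to the nonlocal Problem~\ref{prob:main}. The only difference is that you supply a full case analysis for step (i) where the paper simply asserts positivity as ``easy to verify,'' and you make explicit why $\tilde h'(1)>0$ and why the domain $\cD$ is independent of $k$.
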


\begin{proof}
Let $g\colon \R_{\ge 0}\to \R$ be a solution of the initial value problem~\eqref{eq:diffeqg}, so $g$ satisfies
\begin{subequations}
\label{eq:dynamg}	
\begin{align}
		\label{eq:dynamgdiff}
		g'(t) &= -n_d (g(t))^2-bg(t)+c,\\
		g(0) &= \tilde h'(1) > 0,
\end{align}
\end{subequations}
with $b=l_d+n_p+n_r\ge 0$, $c=2(l_p+l_r+m(n_p+n_r))\ge 0$. It is easy to verify that the solution $g(t)$ to this initial value problem will be positive - a fact which is independent of $n_d,b,c$. Furthermore, because the right-hand side of \eqref{eq:dynamgdiff} is $C^\infty$ with respect to $g$, we conclude that $g\in C^\infty(\R_{\ge 0})$, too. Thus, the assumptions of Theorem \ref{thm:globexisloc} are satisfied, which means that there exists a solution $G\in C^\infty(\cD)$ of Problem \ref{prob:H}. In view of the Problem Equivalence \ref{lem:probequiv}, $G$ is also a solution of Problem \ref{prob:main}.
\end{proof}

By Lemma \ref{lem:probequiv}, uniqueness of the $C^2(\cD)$ solution for the nonlocal problem also follows directly from the uniqueness of solutions of the local problem.\medskip 

We want to close this section by giving a numerical visualization of a solution of our Problem \ref{prob:main}. Figure \ref{fig:GPlotAndChar}(a) shows the three dimensional plot of the solution for given parameter values and given initial condition. Although we have only calculated the function $G$ up to $t=0.2$, we can nevertheless conjecture the existence of a steady state. Figure \ref{fig:GPlotAndChar}(c) shows a contour plot of the same function $G$. The existence of steady states will be discussed in detail in Section~\ref{sec:steadystates} followed by some more numerical simulations. Finally, Figure \ref{fig:GPlotAndChar}(b) shows some projected characteristics, along which we computed the value of $G$.

\begin{figure}[h]
	\centering
	\begin{overpic}[width = 0.9\textwidth]
		{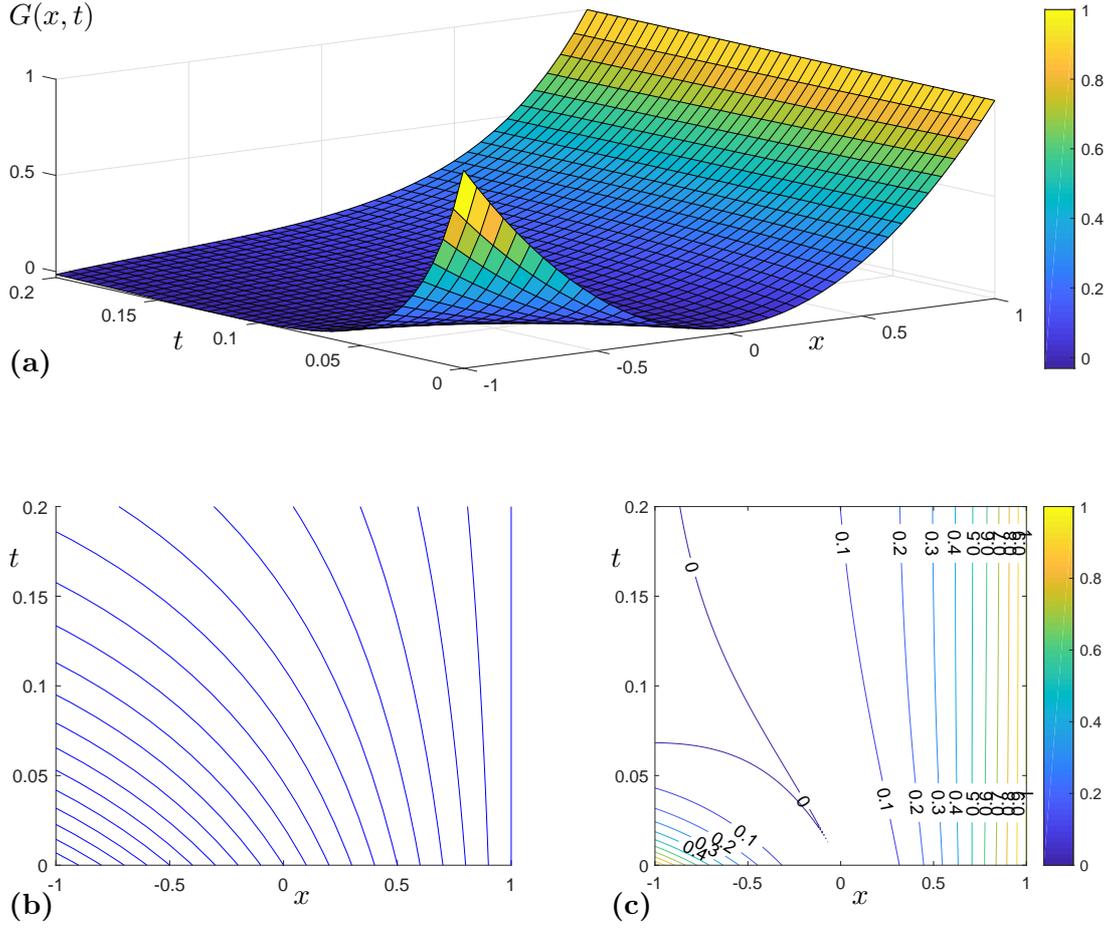}
		\put(0,48){\textbf{(a)}}		
		\put(0,-2){\textbf{(b)}}
		\put(55,-2){\textbf{(c)}}
		\put(73,50){$x$}
		\put(15,50){$t$}
		\put(0,80){$G(x,t)$}
		\put(26,-1){$x$}
		\put(0,30){$t$}
		\put(77,-1){$x$}		
		\put(55,30){$t$}
	\end{overpic}
	\captionsetup{aboveskip=15pt}
	\caption{\label{fig:GPlotAndChar} (a) Three dimensional plot of $G(x,t)$. (b) Projected characteristics along which we computed the values of $G(x,t)$. (c) Contour plot of the same function $G$. (Parameter values: $m=3,\ n_d=1,\ \omega_p=1,\ l_p=0,\ n_p=1,\ \omega_r=1,\ l_d=1,\ n_r=1,\ l_r=1$, initial condition: $h(x) = x^2$)}
\end{figure}

\newpage

\section{Steady States}\label{sec:steadystates}

In this section we investigate the existence of steady states of the PDE \eqref{eq:MainEquation}. We will restrict ourselves to the spatial domain $x\in [-1,1]$.

\subsection{Problem and General Strategy}

In our case a steady state $G^*=G^*(x)$ of the PDE \eqref{eq:MainEquation} has to 
satisfy the equation
\begin{align}\label{eq:steadymain}
	\begin{split}
		0 &= (x-1)\left[x\left( \omega_p+\frac{2l_p+n_p m}{G^\star_x(1)}\right)-\omega_r-\omega_p-l_d-n_dG^\star_x(1)\right]G^\star_x(x)\\ &+\left[ (x-1)\left(\omega_rG^\star_x(1)+2l_r+n_rm\right)-n_r-n_p\right] G^\star(x) +(n_r+n_p)x^m.
	\end{split}
\end{align}
As we already know from Proposition \ref{prop:DiffGlAnf}, $g(t)=G_x(1,t)$ satisfies the initial value problem
\begin{align}\label{eq:diffglanf2}
	g'(t)&=-n_d(g(t))^2-bg(t)+c\\ g(0)&=\tilde h'(1)> 0,
\end{align}
with $b=l_d+n_p+n_r\ge 0$ and $c=2(l_p+l_r+m(n_p+n_r))\ge 0$.
Assuming that $G(x,t)$ is a generating function for all $t\ge 0$ and $\lim_{t\to \infty} G_x(1,t) =0$, we can directly deduce that $G(x,t)$ converges to $G^\star(x)\equiv 1$. Similarly, if $\lim_{t\to \infty} G_x(1,t)=\infty$, which may happen if $n_d=b=0$ and $c>0$, there is obviously no steady state. For all the remaining cases, \eqref{eq:diffglanf2} yields a unique positive equilibrium point for $G_x(1,t)$. Substituting this equilibrium point for $G_x(1,t)$ into \eqref{eq:steadymain}, we can formulate the mathematical problem of finding a steady state as follows:

\begin{problem}\label{prob:steadystate}
	Find a continuous function $G^\star \colon [-1,1]\to \R$ such that
	\begin{align}\label{eq:steadyimplicit}
		0=(x-1)(x c_1-c_2)G^\star_x(x)+((x-1)c_3-c_4)G^\star (x)+c_4x^m,
	\end{align}
	where the constants $c_1,\dots,c_4$ are given by
	\begin{align*}
		c_1 &= \omega_p+\lim_{t\to \infty} \frac{2l_p+n_pm}{G_x(1,t)},&c_2 &= \omega_r+\omega_p+l_d+ \lim_{t\to \infty} n_d G_x(1,t),\\
		c_3 &= \lim_{t\to \infty}\omega_rG_x(1,t)+2l_r+n_rm, &c_4 &=	n_r+n_p.
	\end{align*}
\end{problem}

\begin{remark}
Given a solution $G^\star$ of Problem \ref{prob:steadystate}, which satisfies $G^\star_x(1)=0$, one always needs to be careful, since it does not automatically need to be a steady state of the PDE \eqref{eq:MainEquation}. That is because the right-hand side of the PDE may not be well defined. Correspondence between solutions of Problem \ref{prob:steadystate} and steady states of PDE \eqref{eq:MainEquation} is only guaranteed, if $G^\star_x(1)\neq 0$.
\end{remark}

\begin{remark}
A solution $G^\star$ of Problem \ref{prob:steadystate} may satisfy $G^\star_x(1)=0$ even though we have already excluded $\lim_{t\to \infty} G_x(1,t) = 0$. Consider for example the case $c_3 = c_4 = 0$.
\end{remark}

The equation \eqref{eq:steadyimplicit} is an implicit differential equation with singular points at $x=1$ and $x=\frac{c_2}{c_1}$. Under the assumption $c_4\neq 0$, inserting the singular points into \eqref{eq:steadyimplicit} yields $G^\star(1)=1$ and 
\begin{align}
	G^\star\left(\frac{c_2}{c_1}\right) = \frac{-c_4 (\frac{c_2}{c_1})^m}{(\frac{c_2}{c_1}-1)c_3-c_4}.
\end{align}
At these points, however, the value of $G^\star_x$ cannot be determined by $x$ and $G^\star(x)$. At the nonsingular points we can divide by the pre-factor of $G^\star_x$, provided that either $c_1>0$ or $c_2>0$, to get
\begin{align}\label{eq:steadyexplicit}
	G^\star_x(x) = \underbrace{\frac{-((x-1)c_3-c_4)}{(x-1)(x c_1-c_2)}}_{\eqqcolon f(x)} G^\star(x) + \underbrace{\frac{-c_4 x^m}{(x-1)(x c_1 -c_2)}}_{\eqqcolon b(x)},
\end{align}
which is a linear explicit differential equation. To find a solution to Problem \ref{prob:steadystate} we will take the ansatz of variation of constants at the nonsingular points and then try to continue the solution up to the singular points. The ansatz of variation of constants is given by the formula
\begin{align}\label{eq:varofconstgen}
	G^\star(x) = \left( G(x_0) + \int_{x_0}^x e^{-a(s)} b(s)\ \mathrm ds\right) e^{a(s)},
\end{align}
where
\begin{align}
	a(x) = \int_{x_0}^x f(s) \ \mathrm ds.
\end{align}

Unfortunately, to address Problem \ref{prob:steadystate} in general, we have to distinguish between many cases. Since most of those cases are very similar, we will only present one case in full detail. Specifically, we will consider the most interesting case when we have two different singular points in our domain $[-1,1]$, that means if $\frac{c_2}{c_1}<1$. Furthermore, we will also assume that $c_4>0$.

\subsection{Deriving a formula for a steady state}

In this section we cover the case $0\le c_2<c_1$, $c_4>0$. As shown in Figure \ref{fig:SteadyStateTrajectories}, there are two singularities in our domain $[-1,1]$; one at $x=\frac{c_2}{c_1}$ and the other one at $x=1$. First of all, we will look for solutions $G_1^\star$ of the ODE \eqref{eq:steadyexplicit} in the interval $[-1,\frac{c_2}{c_1})$. Because there is no initial value given we expect to get a one-parameter family of solutions, which we will denote by $G_{1,p_1}^\star$. Similarly, let $G_{2,p_2}^\star$ be the family of solutions in the interval $(\frac{c_2}{c_1},1)$. In view of our preliminary calculations, we will then define
\begin{align}\label{eq:c1gc2G}
	G^\star(x)\coloneqq \begin{cases} G_{1,p_1}^\star(x) & \text{if }x\in [-1,\frac{c_2}{c_1})\\
	\frac{-c_4 (\frac{c_2}{c_1})^m}{(\frac{c_2}{c_1}-1)c_3-c_4} &\text{if }x=\frac{c_2}{c_1}\\
	G_{2,p_2}^\star(x) & \text{if } x\in (\frac{c_2}{c_1},1)\\
	1& \text{if } x=1\\
	\end{cases},
\end{align}
where we choose the parameters $p_1,p_2$ such that $G^\star$ is continuous on $[-1,1]$. It first seems impossible to find appropriate parameters because there are three conditions, specifically the left-sided limit of $G_1^\star$ at $x=\frac{c_2}{c_1}$, the right-sided limit of $G_2^\star$ at $x=\frac{c_2}{c_1}$ and the left-sided limit of $G_2^\star$ at $x=1$, but only two parameters. We will show that we can nevertheless find $p_1,p_2$ such that all these conditions are satisfied.\medskip

\begin{figure}[htbp]
	\centering
	\begin{overpic}[width = 0.5\textwidth]
		{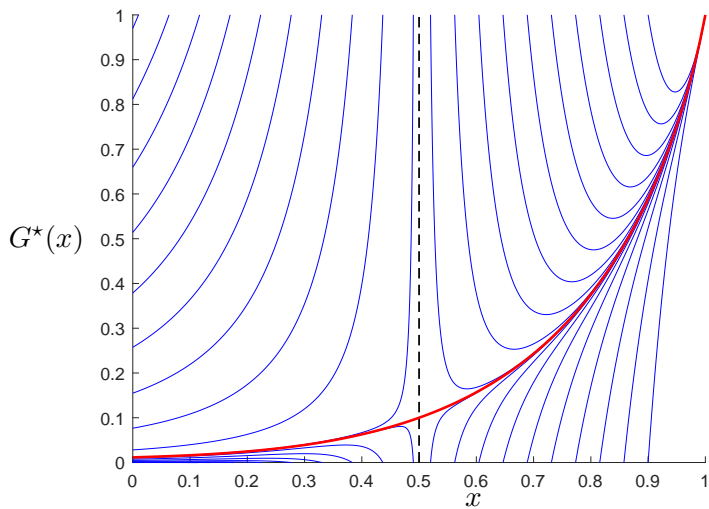}
		\put(-15,40){$G^\star (x)$}
		\put(60,-3){$x$}
	\end{overpic}
	\caption{\label{fig:SteadyStateTrajectories} Trajectories of \eqref{eq:steadyexplicit} in the extended phase in case $0\le c_2<c_1,c_4>0$. Coefficients: $c_1=2,c_2=1,c_3=1,c_4=2,m=3$. Singularities at $x=\frac{c_2}{c_1}=\frac{1}{2}$ and $x=1$. We are looking for the red trajectory, which is the only one that is continuous on the whole interval.}
\end{figure}

First of all, as we want to use the approach of variation of constants and thus need to integrate $f$, it is helpful to rewrite it as
\begin{align}
	f(x) &= \frac{-((x-1)c_3-c_4)}{(x-1)(x c_1-c_2)} = \frac{\alpha}{x-1} + \frac{\beta}{x-\frac{c_2}{c_1}},
\end{align}
with constants
\begin{align}
	\alpha &=\frac{c_4}{c_1-c_2}>0, \quad \beta = \frac{-c_1c_3+c_2c_3-c_1c_4}{(c_1-c_2)c_1}<0.
\end{align}
To keep notation simple, we first define $w_1(x) \coloneqq \exp(a(x))$. A short calculation based on the choice $x_0=1$ and the fact $x<\frac{c_2}{c_1}$ shows that
\begin{align}
	w_1(x) =(1-x)^\alpha \left(\frac{c_2}{c_1}-x \right)^\beta 2^{-\alpha}\left(\frac{c_2}{c_1}+1\right)^{-\beta}.
\end{align}
Substituting this into the ansatz of variation of constants \eqref{eq:varofconstgen} yields
\begin{align}\label{eq:c1gc2G1}
	G_1^\star(x) = \left( G^\star_1(-1)+\int_{-1}^x \frac{b(s)}{w_1(s)}\ \mathrm ds\right) w_1(x).
\end{align}
For every $G_1^\star(-1)$ this is a solution of \eqref{eq:steadyexplicit} and as a result also of \eqref{eq:steadyimplicit} for $x\in [-1,\frac{c_2}{c_1})$. However, as $x\uparrow \frac{c_2}{c_1}$ we have that $|w_1(x)|\to \infty$. As a consequence, the only chance to continuously extend the solution to $x=\frac{c_2}{c_1}$ is if 
\begin{align}
	G_1^\star(-1)+\int_{-1}^x \frac{b(s)}{w_1(s)}\ \mathrm ds \to 0 \quad \text{as } x\uparrow \frac{c_2}{c_1}.
\end{align}
Thus, the obvious choice for $G_1^\star(-1)$ is given by
\begin{align}\label{eq:c1gc2G-1}
	G_1^\star(-1) = -\int_{-1}^\frac{c_2}{c_1} \frac{b(s)}{w_1(s)}\ \mathrm ds.
\end{align}
To check that the integral is actually converging we write
\begin{align}
	G_1^\star(-1) &= -\lim_{x\uparrow \frac{c_2}{c_1}}\int_{-1}^x (1-s)^{-\alpha}\left(\frac{c_2}{c_1}-s\right)^{-\beta}2^{\alpha} \left(\frac{c_2}{c_1}+1\right)^{\beta} \frac{-c_4 s^m}{(s-1)(c_1 s-c_2)}\ \mathrm ds\\
	\label{eq:G1star-1int}
	&= -\lim_{x\uparrow \frac{c_2}{c_1}} \int_{-1}^x \left(\frac{c_2}{c_1}-s\right)^{-\beta-1} u(s)\ \mathrm ds
\end{align}
with
\begin{align}
	u(x) = (1-x)^{-\alpha} 2^\alpha \left(\frac{c_2}{c_1}+1\right)^{\beta} \frac{c_4 x^m}{c_1(x-1)}.
\end{align}
As $u$ is a continuous function on the closed interval $[-1, \frac{c_2}{c_1}]$ we can find a uniform upper bound to $|u(x)|$ for $x\in [-1,\frac{c_2}{c_1}]$. Using that $u(\frac{c_2}{c_1})\neq 0$ an estimation of the integral in \eqref{eq:G1star-1int} shows that the limit exists if and only if $\beta <0$. But as this condition is always true, $G_1^\star(-1)$ is well defined and thus our parameter $p_1$ is set. This shows that $G_1^\star$ defined in \eqref{eq:c1gc2G1} with $G_1^\star(-1)$ given by \eqref{eq:c1gc2G-1} is a solution of \eqref{eq:steadyimplicit} for $x\in [-1,\frac{c_2}{c_1})$.\\

Now we dedicate ourselves to the second segment of $G^\star$, which is the interval $(\frac{c_2}{c_1},1)$. For this part we choose any $y\in (\frac{c_2}{c_1},1)$ and use it as the point where the initial value will be set. Again we define $w_2(x)\coloneqq \exp(a(x))$ with $x_0=y$. Under the assumption that $x\in (\frac{c_2}{c_1},1)$, a short calculation yields that
\begin{align}
	w_2(x) = (1-x)^\alpha\left(x-\frac{c_2}{c_1}\right)^\beta (1-y)^{-\alpha}\left(y-\frac{c_2}{c_1}\right)^{-\beta},
\end{align}
so after substitution of this into the variation-of-constants formula \eqref{eq:varofconstgen}, it reads as
\begin{align}\label{eq:c1gc2G2}
	G^\star_2(x) = \left( G^\star_2(y)+\int_y^x \frac{b(s)}{w_2(s)}\ \mathrm ds \right) w_2(x).
\end{align}
As in the above case, if $x\downarrow \frac{c_2}{c_1}$ we have that $|w_2(x)|\to \infty$, consequently we should obviously choose
\begin{align}
	G^\star_2(y) = -\int_y^\frac{c_2}{c_1} \frac{b(s)}{w_2(s)}\ \mathrm ds.
\end{align}
Again, a short computation yields that the integral is converging and thus that $G^\star_2(y)$ is well defined. This condition fixes our second parameter $p_2$, whereby we can now define $G^\star$ as anticipated in \eqref{eq:c1gc2G}.
\begin{remark}
	The definition of $G_2^\star$ does not depend on the choice of $y\in (\frac{c_2}{c_1},1)$.
\end{remark}

So far we have only investigated necessary conditions for $G^\star$ and have found a unique $G^\star$, which satisfies these conditions. It remains to show that this $G^\star$ actually is a solution of our Problem \ref{prob:steadystate}, which means that we have to show that $G^\star$ is continuous, particularly at $x=\frac{c_2}{c_1}$ and $x=1$, as continuity at the other points is trivial. In total, the following three conditions need to be satisfied for $G^\star$ being a solution to Problem \ref{prob:steadystate}:

\begin{enumerate}[I]
	\item \label{item:steadycond1}$G^\star(\frac{c_2}{c_1}) = \lim\limits_{x\uparrow \frac{c_2}{c_1}} G_1^\star(x)\eqqcolon G^\star_-(\frac{c_2}{c_1})$,
	\item \label{item:steadycond2}$G^\star(\frac{c_2}{c_1}) = \lim\limits_{x\downarrow \frac{c_2}{c_1}} G_2^\star(x)\eqqcolon G^\star_+(\frac{c_2}{c_1})$,
	\item \label{item:steadycond3}$G^\star(1) = \lim\limits_{x\uparrow 1} G_2^\star(x) \eqqcolon G^\star_-(1)$.
\end{enumerate}

A short calculation utilizing L'Hospital's rule verifies \ref{item:steadycond1}:

\begin{align*}
	G^\star_-\left(\frac{c_2}{c_1}\right) &= \lim_{x\uparrow \frac{c_2}{c_1}} \left( -\int_{-1}^\frac{c_2}{c_1} \frac{b(s)}{w_1(s)}\ \mathrm ds+\int_{-1}^x \frac{b(s)}{w_1(s)}\ \mathrm ds\right) w_1(x)\\
	&= \lim_{x\uparrow \frac{c_2}{c_1}} b(x) \frac{\omega_1^{-1}(x)}{\frac{\mathrm d}{\mathrm dx}\omega_1^{-1}(x)}\\	
	&= \lim_{x\uparrow \frac{c_2}{c_1}} \frac{c_4x^m}{c_1(1-x)(\frac{c_2}{c_1}-x)} \left(\frac{(1-x)^{-\alpha}(\frac{c_2}{c_1}-x)^{-\beta}}{\alpha (1-x)^{-\alpha-1}(\frac{c_2}{c_1}-x)^{-\beta}+\beta (1-x)^{-\alpha}(\frac{c_2}{c_1}-x)^{-\beta-1}}\right)\\
	&=\lim_{x\uparrow \frac{c_2}{c_1}} \frac{c_4 x^m}{c_1 \alpha(\frac{c_2}{c_1}-x)+c_1\beta (1-x)}\\
	&=\frac{-c_4(\frac{c_2}{c_1})^m(c_1-c_2)}{(-c_1c_3+c_2c_3-c_1c_4)(\frac{c_2}{c_1}-1)} = \frac{-c_4 (\frac{c_2}{c_1})^m}{(\frac{c_2}{c_1}-1)c_3-c_4} = G^\star\left(\frac{c_2}{c_1}\right)
\end{align*}

A similar computation shows \ref{item:steadycond2}. To show \ref{item:steadycond3}, we write
\begin{align}
	&G^\star_2(y)+\int_y^x \frac{b(s)}{w_2(s)}\ \mathrm ds\\
	&=G^\star_2(y)+\int_y^x (1-s)^{-\alpha-1}\underbrace{\left(s-\frac{c_2}{c_1}\right)^{-\beta}(1-y)^{\alpha}\left(y-\frac{c_2}{c_1}\right)^\beta\ \frac{c_4 s^m}{(sc_1-c_2)}}_{\eqqcolon u(s)} \mathrm ds.
\end{align}
Note that $u$ is continuous at $x=1$. Our first claim is that
\begin{align}
	\frac{G^\star_2(y)+\int_y^x \frac{b(s)}{w_2(s)}\ \mathrm ds}{\frac{1}{\alpha}(1-x)^{-\alpha}} \to u(1)\quad \text{as } x\uparrow 1.
\end{align}
To see this we use L'Hospital's rule. Differentiating the numerator and the denominator yields
\begin{align}\label{eq:useLHospital}
	\lim_{x\uparrow 1} \frac{G^\star_2(y)+\int_y^x \frac{b(s)}{w_2(s)}\ \mathrm ds}{\frac{1}{\alpha}(1-x)^{-\alpha}} = \lim_{x\uparrow 1} \frac{(1-x)^{-\alpha-1}u(x)}{(1-x)^{-\alpha-1}} = u(1).
\end{align}

Now we can use this claim to prove \ref{item:steadycond3}. We have
\begin{align}
	G^\star_-(1) &= \lim_{x\uparrow 1} G_2^\star(x)\\
	&=\lim_{x\uparrow 1} \left( G^\star_2(y)+\int_y^x \frac{b(s)}{w_2(s)}\ \mathrm ds \right) w_2(x)\\
	&=\lim_{x\uparrow 1} \left( G^\star_2(y)+\int_y^x \frac{b(s)}{w_2(s)}\ \mathrm ds \right) (1-x)^\alpha\left(x-\frac{c_2}{c_1}\right)^\beta (1-y)^{-\alpha}\left(y-\frac{c_2}{c_1}\right)^{-\beta}\\
	&=\frac{1}{\alpha} u(1)\lim_{x\uparrow 1} \left(x-\frac{c_2}{c_1}\right)^\beta (1-y)^{-\alpha}\left(y-\frac{c_2}{c_1}\right)^{-\beta}\\
	&=\frac{1}{\alpha}\frac{c_4}{c_1-c_2} = 1.
\end{align}
This proves \ref{item:steadycond3} and thus concludes the proof that in the considered case, $G^\star$ as constructed above, is the unique solution to Problem \ref{prob:steadystate}.\medskip

We remark that the choice of $G^\star_2(y)$ does not have an influence on $\lim_{x\uparrow 1} G_2^\star(x)$, as it vanishes in the calculation of the limit in \eqref{eq:useLHospital}. This is also noticeable since all solutions in Figure \ref{fig:SteadyStateTrajectories} pass through $(1,1)$. Furthermore, by similar computations it can also be shown that $G^\star$ as it is defined in \eqref{eq:c1gc2G} is even continuously differentiable.

\subsection{A Summary of all Cases}\label{sec:summaryofcases}

In this section we provide an overview of the existence and properties of solutions to Problem \ref{prob:steadystate} in all cases. Firstly, note that in the case $c_1=c_2=c_3=c_4=0$ every continuous function solves the problem. However, as this case requires almost all processes to be absent, it is not of further interest. Secondly, the case $c_3=c_4=0$ admits only constant functions as solutions. In all the other cases, the existence of a solution to Problem \ref{prob:steadystate} will be collected in the following table. We will also indicate whether it satisfies the condition $G^\star(1)=1$.

\begin{table}[h]
\centering

\begin{tabular}{|c|p{5cm}|p{5cm}|}
	\hline
	Case 				& $c_4=0,\ c_3>0$ 					& $c_4>0$\\ \hline
	$0=c_1=c_2$ 		& Only $G^\star(x)\equiv 0$  & \vspace{-2mm} \begin{tabular}{p{2.2cm}|p{2cm}} $c_3=0$ & $c_3>0$\\ \hline Only $G^\star(x)\equiv 1$ if $m=0$& Unique solution with $G^\star(1) = 1$ \\   \end{tabular} \vspace{1mm} \\ \hline
	$0=c_1<c_2$			& A one-parameter family & Unique solution\\ \cline{1-1}
	$0<c_1<c_2$			& of solutions. A unique & with $G^\star(1)=1$\\ 
						& one with $G^\star(1)=1$.& \\ \cline{0-1}
	$0<c_1=c_2$			& Only $G^\star(x)\equiv 0$ &  \\ \cline{1-1}
	$0=c_2<c_1$			& & \\ \cline{1-1}
	$0<c_2<c_1$			& & \\ \hline
\end{tabular}
\caption{\label{tab:overview}An overview of the existence of solutions in all cases.}
\end{table}

\section{Numerical Simulations}

In this section we carry out numerical simulations to see, whether the solution converges to the steady state, which we calculated in the previous section. It is natural to start with a case, where most of the coefficients appearing in Problem \ref{prob:steadystate} are different from zero. In particular, Figure \ref{fig:Simulation1}(a) shows the behavior of the solution $G(x,t)$ of our initial value problem with the initial condition $h(x)=\sum_{k=0}^\infty \frac 23 (\frac{x}{3}) ^k = \frac{2}{3-x}$ in the case $0<c_1<c_2$ and $c_3,c_4>0$. Moreover, the parameters $m,n_d,\dots,l_r$ are chosen such that the solution $G^\star$ of Problem \ref{prob:steadystate} satisfies $G^\star_x(1)\neq 0$ and therefore it is a steady state. Figure \ref{fig:Simulation1}(b) shows that $G(\cdot ,t)$ converges to the steady state. As seen in Figure \ref{fig:Simulation1}(c), the difference between $G^\star$ and the solution $G(x,t)$ measured with both $\norm\cdot_\infty$ and $\norm\cdot_{L^2}$ decreases exponentially fast.\medskip

\begin{figure}[h]
	\centering
	\begin{overpic}[width=0.9\textwidth]{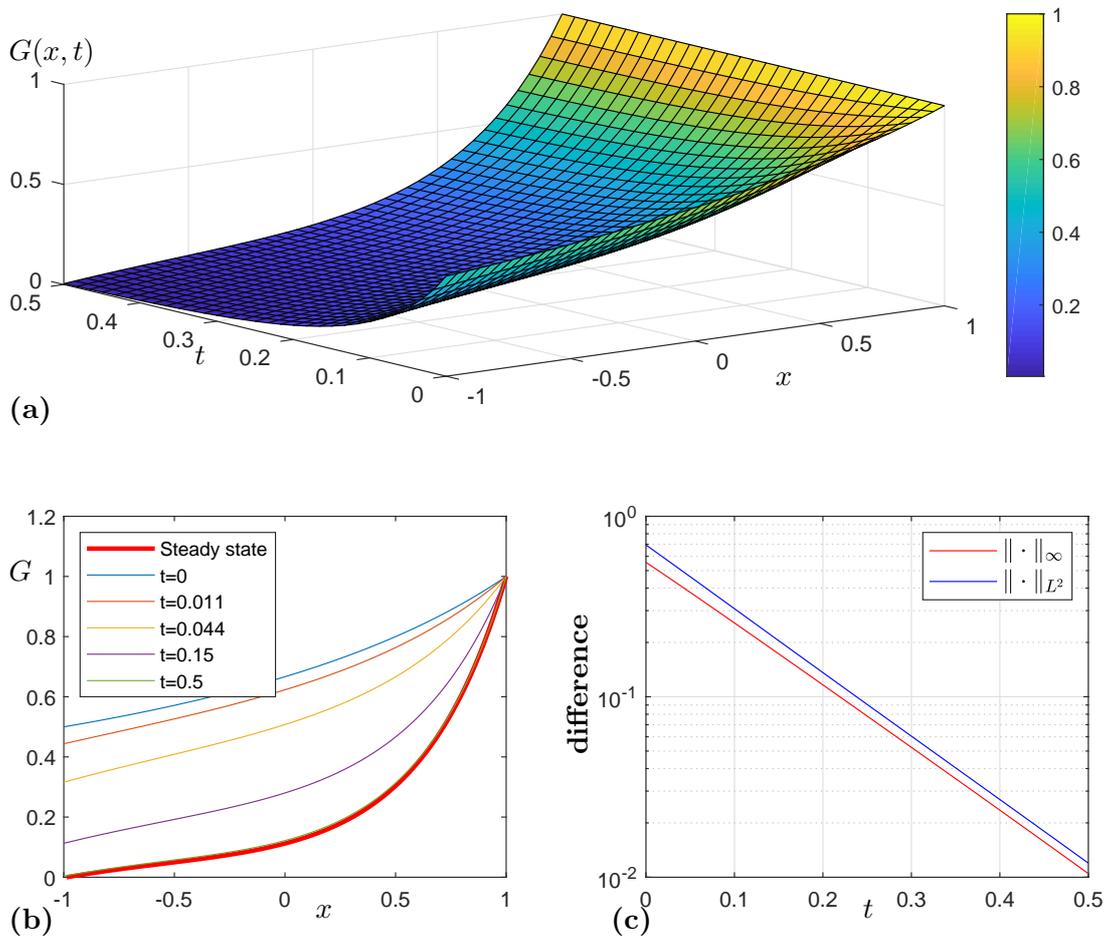}
		\put(0,45){\textbf{(a)}}
		\put(0,-2){\textbf{(b)}}
		\put(55,-2){\textbf{(c)}}
		\put(17,50){$t$}
		\put(70,48){$x$}
		\put(0,78){$G(x,t)$}
		\put(28,-1){$x$}
		\put(0,30){$G$}
		\put(78,-1){$t$}
		\put(51,14){\rotatebox{90}{\textbf{difference}}}
	\end{overpic}
	\captionsetup{aboveskip=20pt}
	\caption{\label{fig:Simulation1} (a) $3$-dimensional plot of the solution $G(\cdot,\cdot)$. (b) Solution $G(\cdot,t)$ for different values of $t$ and the steady state. (c) Difference of the solution $G(\cdot,t)$ to the steady state with respect to $\norm\cdot_\infty$ and $\norm\cdot_{L^2}$ in dependence of time. (Parameter values: $m=3, n_d=1, \omega_p=1, l_p=0, n_p=1, \omega_r=1,l_d=1,n_r=1,l_r=1$, initial condition: $h(x)=\frac 23\sum_{k=0}^\infty (\frac{x}{3})^k = \frac{2}{3-x}$)}
\end{figure}

However, we cannot always expect $G(\cdot,t)$ to converge to $G^\star$ with an exponential rate. To illustrate this, we now consider the case $c_3=c_4=0$ and $\lim_{t\to \infty} G_x(1,t)\in (0,\infty)$. The only solution $G^\star$ to Problem \ref{prob:steadystate} which also satisfies $G^\star(1)=1$ is given by $G^\star(x)\equiv 1$. A numerical simulation, as seen in Figure \ref{fig:Simulation3algebraicConvergenceTo1}(a), shows that $G(x,t)$ converges to $G^\star$. Figure \ref{fig:Simulation3algebraicConvergenceTo1}(b) displays the difference of $G^\star$ to the solution $G(x,t)$. In both the $L^\infty$ and the $L^2$ norm, this difference does not decrease exponentially fast but we can only observe convergence of algebraic order.

\begin{figure}[h!]
	\centering
	\begin{overpic}[width=0.9\textwidth]{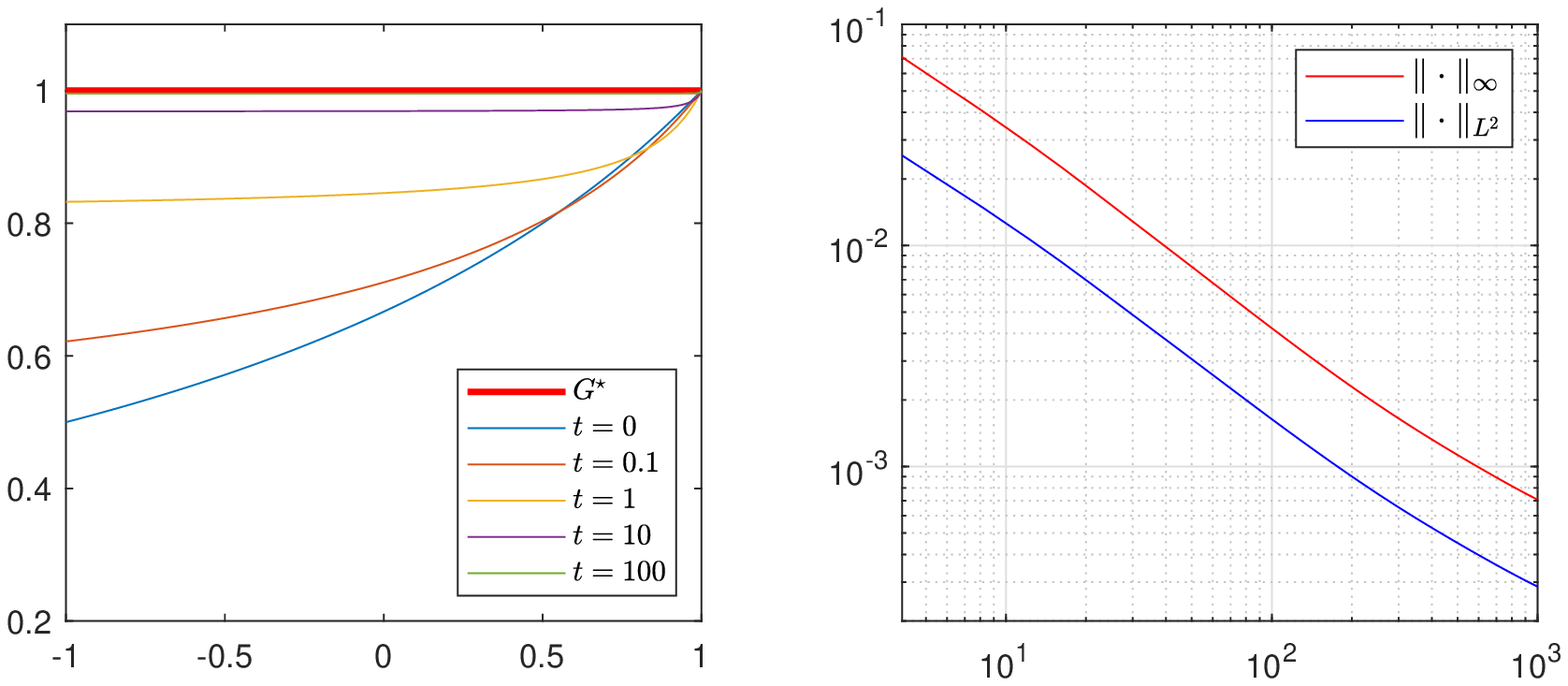}
		\put(0,-2){\textbf{(a)}}
		\put(55,-2){\textbf{(b)}}
		\put(28,-0.5){$x$}
		\put(0,33){$G$}
		\put(88,-0.5){$t$}
		\put(50.5,18){\rotatebox{90}{\textbf{difference}}}
	\end{overpic}
	\captionsetup{aboveskip=18pt}
	\caption{\label{fig:Simulation3algebraicConvergenceTo1} (a) Solution $G(\cdot ,t)$ for different values of $t$ and the solution of Problem \ref{prob:steadystate}. (b) Difference of the solution $G(\cdot,t)$ to the solution of Problem \ref{prob:steadystate} with respect to $\norm\cdot_\infty$ and $\norm\cdot_{L^2}$ in dependence of time. (Parameter values: $m=3, n_d=1, \omega_p=1, l_p=1, n_p=0, \omega_r=0,l_d=1,n_r=0,l_r=0$, initial condition: $h(x)=\frac 23\sum_{k=0}^\infty (\frac{x}{3})^k = \frac{2}{3-x}$)}
\end{figure}

\vspace{1cm}

\begin{figure}[h!]
	\centering
	\begin{overpic}[width=0.9\textwidth]{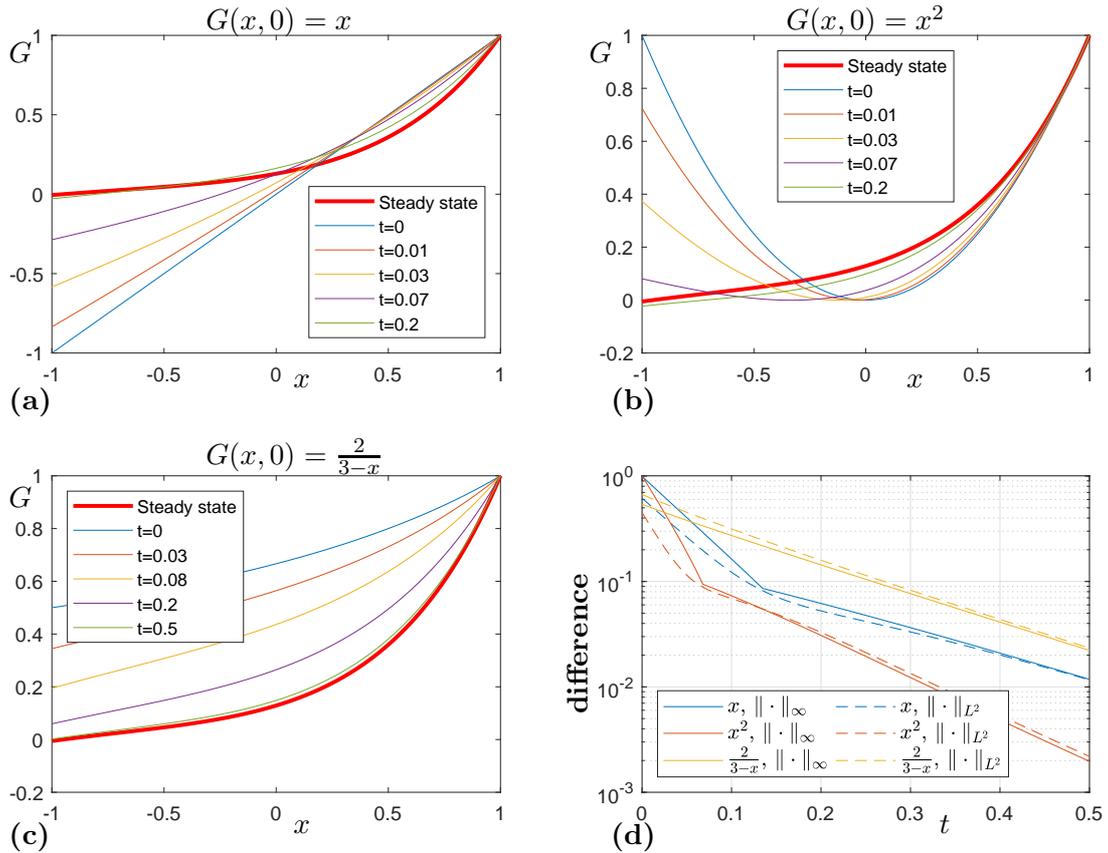}
		\put(0,38){\textbf{(a)}}
		\put(55,38){\textbf{(b)}}
		\put(0,-2){\textbf{(c)}}
		\put(55,-2){\textbf{(d)}}
		\put(26,-1){$x$}
		\put(0,29){$G$}
		\put(85,-1){$t$}
		\put(51,10){\rotatebox{90}{\textbf{difference}}}
		\put(0,70){$G$}
		\put(53,70){$G$}
		\put(26,40){$x$}
		\put(82,40){$x$}
		\put(18,73){$G(x,0)=x$}
		\put(18,33){$G(x,0)=\frac{2}{3-x}$}
		\put(71,73){$G(x,0)=x^2$}
	\end{overpic}
	\captionsetup{aboveskip=18pt}
	\caption{\label{fig:combineinitialvalues} (a) Solution $G(\cdot,t)$ for different values of $t$ with initial condition $G(x,0)=x$. (b) Solution $G(\cdot,t)$ for different values of $t$ with initial condition $G(x,0)=x^2$. (c) Solution $G(\cdot,t)$ for different values of $t$ with initial condition $G(x,0)=\frac{2}{3-x}$. (d) Difference of the solution $G(\cdot,t)$ to the steady state with respect to $\norm\cdot_\infty$ and $\norm\cdot_{L^2}$ for all three initial conditions in dependence of time. (Parameter values: $m=3, n_d=1, \omega_p=0, l_p=0, n_p=0, \omega_r=1, l_d=1, n_r=1, l_r=1$)}
\end{figure}

Finally, we want to investigate the influence of different initial conditions to the rate of convergence. For that purpose, we fix all of our coefficients and vary only the initial conditions. As shown in Figure \ref{fig:combineinitialvalues}(a-c) the solution $G(\cdot ,t)$ converges to the steady state in each case. Surprisingly, even though the parameters are the same for each simulation in Figure \ref{fig:combineinitialvalues}, we can see that the rates of convergence are different. Another interesting fact is that for the initial conditions $G(x,0)=x$ and $G(x,0)=x^2$ the map $t\mapsto \norm{G(\cdot,t)-G^\star}_\infty$ in Figure \ref{fig:combineinitialvalues}(d) has a sharp bend. This can be explained by the fact that for $t$ at the bend, the point where the supremum is attained discontinuously changes from $x=-1$ to $x\in (-1,1)$.

\section{Conclusion}

In this work, we have analyzed a certain class of nonlocal partial differential equations arising in the generation of complex networks. We have shown that the equation is well-posed in the classical sense by using a combination of tools from differential equations. The first step was to convert the nonlocal problem with a point nonlocality into a local problem by solving an auxiliary ordinary differential equation. In the second step, we used the method of characteristics in combination with ideas about generating functions to establish solvability and regularity for the PDE. Furthermore, we studied the existence of steady states analytically, and their stability numerically. For future work, there are still open problems regarding global analytical stability and the convergence speed to steady states. We conjecture, that in most reasonable cases global stability holds and that exponential convergence takes places in $L^p$ or more general Sobolev spaces except for degenerate parameter configurations.\medskip 

In summary, we have contributed several mathematical techniques to analyze a particular class of nonlocal models. Yet, we emphasize that although the model studied here was motivated by one particular work of Silk et al~\cite{Silk2016}, there is a larger trend that nonlocal PDEs appear in the dynamics of and on complex networks. For example, there is the recent theory of PDEs over graphons~\cite{Medvedev3,KuehnThrom2}, where coupling structures are encoded in a nonlocal integro-differential equation. Furthermore, there is extensive work on nonlocal mean-field limits of coupled oscillator models connected to chimera states~\cite{KuramotoBattogtokh,AbramsStrogatz} as well as on nonlocal neural field models derived from networks~\cite{Bressloff,Laing}. This concentration of activity from different scientific communities on nonlocal PDEs in the network science context does not seem to be just accidental but points towards a bigger emerging theme. Therefore, one may expect that nonlocal PDE methods for network analysis are going increase further in their relevance. 

\bibliography{References}

\end{document}